\newtheorem{thm}{Theorem}[section]
\newtheorem{cor}[thm]{Corollary}
\newtheorem{lem}[thm]{Lemma}
\newtheorem{prop}[thm]{Proposition}
\theoremstyle{remark}
\theoremstyle{definition}
\newcommand{\C}{\mathbb{C}}
\newcommand{\R}{\mathbb{R}}
\newcommand{\Z}{\mathbb{Z}}
\newcommand{\T}{\mathbb{T}}
\newcommand{\NT}{\mathbb{T}_\theta^2}
\newcommand{\CNT}{C(\mathbb{T}_\theta^2)}
\newcommand{\SNT}{C^\infty(\mathbb{T}_\theta^2)}
\newcommand{\Tr}{{\textrm{Tr}}}
\newcommand{\done}{\delta_1}
\newcommand{\dtwo}{\delta_2}
\newcommand{\del} {\delta}
\newcommand{\vphi}{\varphi}
\newcommand{\pv}{\partial_{\varphi}}
\newcommand{\De}{D_{e, \sigma}^+}
\newcommand{\Hc}{\mathcal{H}}
\title{A twisted local index formula for curved noncommutative two tori}
\author{Farzad Fathizadeh,$^1$  Franz Luef,$^2$ Jim Tao$\,^3$}
\date{}
\begin{document}
\begin{abstract}
We consider the Dirac operator of a general metric in the 
canonical conformal class on the noncommutative two torus, 
twisted by an idempotent (representing the $K$-theory class 
of a general noncommutative vector bundle), and derive a local 
formula for the Fredholm index of the twisted Dirac operator. Our 
approach is based on the McKean-Singer index formula, and 
explicit heat expansion calculations by making use of Connes' 
pseudodifferential calculus. As a technical tool, a new rearrangement 
lemma is proved to handle challenges posed by the noncommutativity of 
the algebra and the presence of an idempotent in the calculations in addition 
to a conformal factor. 
\end{abstract}

\maketitle

\footnotetext[1]{Department of Mathematics, Computational Foundry, 
Swansea University Bay Campus, SA1 8EN, Swansea, United Kingdom; 
Max Planck Institute for Biological Cybernetics, 72076 T\"ubingen, Germany, 
E-mail: farzad.fathizadeh@swansea.ac.uk}
\footnotetext[2]{Department of Mathematical Sciences, Norwegian University of Science and Technology, 
7491 Trondheim, Norway, E-mail: franz.luef@ntnu.no}
\footnotetext[3]{Department of Mathematics, California Institute of Technology (Caltech), 
MC 253-37, Pasadena, CA 91125, USA, E-mail: jtao@caltech.edu}

\tableofcontents

\section{Introduction}

The celebrated Atiyah-Singer index theorem provides a local formula that calculates 
the  analytical (Fredholm) index of the Dirac operator on a 
compact spin manifold, with coefficients in a vector bundle,  in terms 
of topological invariants, namely characteristic classes \cite{MR0157392, MR0236950, MR0236951, MR0236952}. The theorem has 
deep applications in mathematics and theoretical physics. For instance,  
it implies important theorems such as the Gauss-Bonnet theorem and the 
Riemann-Roch theorem, and it is used in physics to count linearly the number of 
solutions of  fundamental partial differential equations. 

In noncommutative differential geometry \cite{MR823176, MR1303779}, an analog of the Dirac operator is used to 
encode the metric information and to use ideas from spectral geometry and Riemannian 
geometry in studying a noncommutative algebra, viewed as the algebra of functions on 
a space with noncommuting coordinates. 
That is, the data $(C^\infty(M), L^2(S), D)$ of smooth functions on a spin manifold, 
the Hilbert space of $L^2$-spinors, and the Dirac operator is extended to the notion 
of a {\em spectral triple} $(A, \mathcal{H}, D)$ where $A$ is a noncommutative algebra 
acting on a Hilbert sapce $\mathcal{H}$, and $D$ plays the role of the Dirac operator while acting 
in $\mathcal{H}$. The idempotents and thereby $K$-theory 
of the algebra is also used to consider the analog of vector bundles on an ordinary manifold.

The identity of the analytical index and the topological index defined by the Connes-Chern character 
was established in the noncommutative setting in \cite{MR823176}. Moreover, a local formula for the 
Connes-Chern character, which is suitable for explicit calculations, was derived in \cite{MR1334867}. However, as 
we shall elaborate further in \S \ref{twistedspectripsec}, the notion of a spectral triple is suitable for 
type II algebras in Murray-von Neumann classification of algebras, and not for type III cases. 
The remedy brought forth in \cite{MR2427588} for studying the latter is the notion of a {\em twisted spectral triple}: 
they have shown that a twisted version of spectral triples can incorporate type III cases and examples 
that arise in noncommutative conformal  geometry, and that the index pairing and the coincidence of the 
analytic and the topological index continues to hold in the twisted case.  However, a general local formula 
for the index in the twisted case has not yet been found, except for the special case of twists afforded by scaling 
automorphism in \cite{MR2732069}. In fact, the difficulty of the problem of finding a general  local index formula for 
twisted spectral triples raises the need for treating more examples.

The main result of the present article is a local formula for the index of the Dirac operator 
$D^+_{e, \sigma}$ 
of a twisted spectral triple on the opposite algebra of the noncommutative two torus $\NT$, which is 
twisted by a general noncommutative vector bundle represented by an idempotent $e$. 
The Dirac operator that we consider is associated with a general metric in the canonical 
conformal class on $\NT$. Our approach is based on using the McKean-Singer index formula \cite{MR0217739} 
and performing explicit heat kernel calculations by employing Connes' pseudodifferential calculus 
developed in \cite{MR572645} for $C^*$-dynamical systems.  
It should be noted that, following the Gauss-Bonnet theorem proved in \cite{MR2907006} for the canonical conformal 
class on $\NT$ and its extension in \cite{MR2956317} to general conformal classes, study of local geometric invariants 
of curved metrics on noncommutative tori has received remarkable attention in recent years 
\cite{MR3194491, MR3148618, MR3540454, MR3359018, MR3402793, MR3825195, 2016arXiv161109815C}.  
For an overview of these developments one can refer to \cite{2019arXiv190107438F, 2018arXiv181010394L}. 

This article is organized as follows. In \S \ref{prelsec} we provide background material about the
noncommutative two torus, its pseudodifferential calculus, and heat kernel methods. In \S \ref{SpecTripSec}
we explain our construction of a vector bundle over the noncommutative two torus using a twisted
spectral triple with $\sigma$-connections and derive the symbols of the operators
necessary for the index calculation. In \S \ref{calculationssec} we calculate the explicit local terms that 
give the index.   
The appendix contains proofs of new rearrangement lemmas, which overcome the new challenges 
posed in our calculations due to the presence of an idempotent as well as a conformal factor in our 
calculations in the noncommutative setting.

\section{Preliminaries}
\label{prelsec}

In this section we provide some background material about the noncommutative 
two torus $\NT$, and explain how one can use a noncommutative 
pseudodifferential calculus to derive the heat kernel expansion for an elliptic operator 
on $\NT$. 

\subsection{Noncommutative two torus} 

For a fixed irrational number $\theta$, the noncommutative two torus 
$\NT$ is a {\em noncommutative manifold} whose algebra of {\em continuous} 
functions $\CNT$ is the universal (unital) $C^*$-algebra generated by two unitary 
elements $U$ and $V$ that satisfy the commutation relation 
\[
VU = e^{2 \pi i \theta} UV. 
\] 
The ordinary two torus $\T^2 = \left ( \R / 2 \pi \Z \right )^2$ acts on $\CNT$ by 
\begin{equation} \label{torusaction}
\alpha_s \left( U^m V^n \right) = e^{i s \cdot (m, n)} \, U^m V^n, \qquad s \in \T^2, \qquad m, n \in \Z. 
\end{equation}
Since this action, in principle, comes from translation by $s \in \T^2$ written in the Fourier mode, 
its infinitesimal  generators  $\done, \dtwo : \SNT \to \SNT$ are the derivations that are analogs of partial 
differentiations on the ordinary two torus and are given by the  
defining relations 
\[
\done(U) = U, \qquad \done(V) =0, \qquad \dtwo(U) =0, \qquad \dtwo(V)=V.
\]

The space $\SNT$ of the {\em smooth} elements consists of all elements in $\CNT$ that are smooth with respect 
to the action $\alpha$ given by \eqref{torusaction}, which turns out to be a dense 
subalgebra  of $\CNT$ that can alternatively be described as the space of 
all elements of the form $\sum_{m, n \in \Z} a_{m, n} U^m V^n$ with rapidly decaying 
complex coefficients $a_{m, n}$. The analog of integration is provided by the linear 
functional $\tau : \CNT \to \C$ which is the bounded extension of the linear functional 
that sends any smooth element with the {\em noncommutative Fourier expansion} 
$\sum_{m,n} a_{m, n} U^m V^n$ to its constant term $a_{0,0}$. The functional $\tau$ 
turns out to be a positive trace  on $\CNT$ and we shall view it as the volume form of a flat 
canonical metric on $\NT$. In \S \ref{NCconformalsec}, we will explain how one can consider 
a general metric in the conformal class of the flat canonical metric on $\NT$ by means of a 
positive invertible element $e^{-h}$, where $h$ is a self-adjoint element in $\SNT$.

\subsection{Heat kernel expansion}
\label{heatkerexpansiongensec}
We will explain in \S \ref{IndexThmSec} that our method of calculation of a 
noncommutative local index formula, which is the main result of this article, 
is based on the McKean-Singer index formula \cite{MR0217739} and calculation of the relevant terms 
in small time heat kernel expansions. Here we elaborate on the derivation of the 
small heat kernel expansion for an elliptic 
differential operator of order 2 on $\CNT$. The main tool that will be used is the 
pseudodifferential calculus developed in 
\cite{MR572645} for $C^*$-dynamical systems (see also \cite{2018JPhCS.965a2042T, 2018arXiv180303575H}). This 
calculus, in the case of $\NT$, associates to a {\em pseudodifferential symbol} $\rho : \R^2 \to \SNT$ 
a pseudodifferential operator $P_\rho : \SNT \to \SNT$ by the formula
\[
P_{\rho}(a) 
=
\frac{1}{(2\pi)^2} \int_{\mathbb R^2}\!\int_{\mathbb R^2}\! e^{-is\cdot\xi}\rho(\xi) \, \alpha_s(a)\, ds\, d\xi, 
\qquad  a \in \SNT. 
\]
For example, any differential operator given by a finite sum of the form 
$\sum a_{j_1, j_2} \done^{j_1} \dtwo^{j_2}$ with $a_{j_1, j_2} \in \SNT$ 
is associated with the polynomial $\sum a_{j_1, j_2} \xi_1^{j_1} \xi_2^{j_2}$. In general 
a smooth map $\rho : \R^2 \to \SNT$ is a pseudodifferential symbol of order $m \in \Z$ 
if for any non-negative integers $i_1, i_2, j_1, j_2$ there is a constant $C$ such that 
\[
|| \partial_1^{j_1} \partial_2^{j_2}\done^{i_1} \dtwo^{i_2} \rho(\xi)  || \leq C (1+|\xi|)^{m-j_1-j_2},  
\]
where $\partial_1$ and $\partial_2$ are respectively the partial differentiations with 
respect to the coordinates  $\xi_1$ and $\xi_2$ of $\xi \in \R^2$. We denote the space 
of symbols of order $m$ by $S^m$. A symbol $\rho$ of order $m$ is {\em elliptic} if 
$\rho(\xi)$ is invertible for larger enough $\xi$ and there is a constant $c$ such that 
\[
||\rho(\xi)^{-1}|| \leq c (1+|\xi|)^{-m}.
\]
In the case of differential operators, one can see that the symbol is elliptic if 
its leading part is invertible away from the origin.

An important feature of an elliptic operator is that it admits a {\em parametrix}, namely 
an inverse in the algebra of pseudodifferential operators modulo infinitely smoothing 
operators. The latter are the operators whose symbols belong to the intersection of 
all symbols $S^{-\infty} = \cap_{m \in \Z} S^m$. This illuminates the importance of 
pseudodifferential calculus for solving elliptic differential equations.

For our purposes in this article, it is crucial to illustrate that given a positive elliptic differential operator 
$\triangle$ of order 2 on $\SNT$,  one can use the pseudodifferential calculus to derive a small 
time asymptotic expansion for the trace of the heat kernel $\exp(-t \triangle)$. Note that the 
pseudodifferential symbol of $\triangle$ is of the form 
\[
\sigma_\triangle(\xi) = p_2(\xi)+p_1(\xi)+p_0(\xi),
\] 
where each $p_k$ is homogeneous of order $k$ in $\xi$. Since the eigenvalues of $\triangle$ are 
real and non-negative, using the Cauchy integral formula and a clockwise contour $\gamma$ that 
goes around the non-negative real line, one can write
\begin{equation} \label{Cauchyformula}
\exp(-t \triangle) = \frac{1}{2 \pi i} \int_\gamma e^{-t \lambda} (\triangle - \lambda)^{-1} \, d\lambda. 
\end{equation}
Since $\triangle - \lambda$ is an elliptic operator, its parametrix $R_\lambda$ 
will approximate $(\triangle - \lambda)^{-1}$ appearing above in the integrand.  
Since $\triangle - \lambda$ is of order 2, one can write for the symbol of 
$R_\lambda$: 
\[
\sigma_{R_\lambda}(\xi, \lambda, \triangle) = b_0(\xi, \lambda, \triangle) + b_1(\xi, \lambda, \triangle)  + b_2(\xi, \lambda, \triangle) + \cdots, 
\]
where each $r_j$ is of order $-2-j$. 

Then, by considering the ellipticity of $\triangle - \lambda$ and 
the following composition rule, which gives an asymptotic expansion for the symbol of the composition 
of two pseudodifferential operators, 
\[
\sigma_{P_1  P_2}  
\sim 
\sum_{i_1, i_2 \in \Z_{\geq 0}} \frac{1}{i_1! i_2!}\, \partial_1^{i_1} \partial_2^{i_2} \sigma_{P_1} \, 
\done^{i_1} \dtwo^{i_2} \sigma_{P_2},  
\]
one can find a recursive formula for the $r_j$. That is, it turns out that 
\begin{equation} \label{r_0formula}
b_0(\xi, \lambda, \triangle) = (p_2(\xi) - \lambda)^{-1}, 
\end{equation}
and for any $n \in \Z_{\geq 1}$, 
\begin{equation} \label{r_nformula}
b_n(\xi, \lambda, \triangle)  = - \left ( 
\sum_{\substack{i_1+i_2+j+2-k=n \\ 0 \leq j<n, \, 0 \leq k \leq 2}} \frac{1}{i_1! i_2!}
\partial_1^{i_1} \partial_2^{i_2} b_j (\xi, \lambda, \triangle)  \, \done^{i_1} \dtwo^{i_2} (p_k(\xi))  
\right ) 
b_0 (\xi, \lambda, \triangle). 
\end{equation}
By calculating these terms, one can replace $(\triangle-\lambda)^{-1}$ in \eqref{Cauchyformula} 
by $R_\lambda$ and find an approximation of $\exp(-t\triangle)$.

Then, calculating the trace of this approximation, one can derive a small time asymptotic expansion for 
$\Tr(\exp(-t \triangle))$.  That is, one finds that there are elements 
\begin{equation} \label{a_2nformula}
a_{2n}(\triangle) = \frac{1}{2\pi i}  \int_{\R^2} \int_\gamma e^{-\lambda} b_{2n}(\xi, \lambda, \triangle)  \, d\lambda \,d\xi \in \SNT
\end{equation}
 such that 
as $t \to 0^+$, for any $a \in \SNT$:
\begin{equation} \label{asmpexpnct}
\Tr(a \exp(-t \triangle)) \sim t^{-1} \sum_{n=0}^\infty \tau \left (a \, a_{2n} (\triangle) \right ) \, t^n. 
\end{equation}
The terms $a_{2n}(\triangle)$ are local geometric invariants associated with the operator $\triangle$ 
when it is a natural geometric operator such as a Laplacian, or the square of a twisted Dirac operator 
as in this article.

\section{Noncommutative geometric spaces and index theory}
\label{SpecTripSec}

\subsection{Spectral triples} 
Geometric spaces are defined in terms of spectral data in 
noncommutative geometry \cite{MR1303779}. 
A noncommutative geometric 
space is a {\em spectral triple} $(A, \mathcal{H}, D)$, where 
$A$ is an involutive algebra represented by bounded operators 
on a Hilbert space $\mathcal{H}$, and $D$ is the analog of the Dirac 
operator. That is, $D$ is an unbounded self-adjoint operator on its 
domain which is dense in the Hilbert space $\mathcal{H}$, and the 
spectrum of $D$ has similar properties to the spectrum of the Dirac 
operator on a compact Riemannian spin$^c$ manifold, or even more 
generally, that of an elliptic self-adjoint differential operator of order 1 
on a compact manifold.  The {\em spectral dimension} of such a triple is 
the smallest positive real number $d$ such that $|D|^{-d}$ is in the 
domain of the {\em Dixmier trace} $\textnormal{Tr}_\omega$ 
\cite{MR0196508}
(or one can say that for any $\epsilon > 0$, the operator 
$|D|^{-d-\epsilon}$  is a trace-class operator which means that 
its eigenvalues are summable). 
Moreover, an important assumption is that the commutator of $D$ and 
the action of any $a \in A$ on $\mathcal{H}$, $[D, a]:=Da-aD$, 
extends to a bounded operator on $\mathcal{H}$.  In this paradigm, the 
algebra $A$ is allowed to be noncommutative, viewed as the algebra of 
functions on a space with noncommuting coordinates, and the metric 
information is encoded in the operator $D$.

The main classical example of this setup is the triple 
$\left ( C^\infty(M), L^2(S), D_g \right )$, where $C^\infty(M)$ is 
the algebra of smooth complex-valued functions on a compact 
Riemannian manifold with a spin$^c$ structure $S$,  $L^2(S)$ 
is the Hilbert space of the $L^2$-spinors, and $D_g$ is the Dirac 
operator associated with the metric, which acts on its domain 
in $L^2(S)$ and squares to a Laplace-type operator, see for 
example \cite{MR2273508} for details about the 
Dirac operator. Since $S$ is a vector bundle 
over $M$, the algebra $C^\infty(M)$ acts naturally by bounded 
operators on the Hilbert space $L^2(S)$, namely: 
$(f\cdot s)(x) := f(x) s(x),$ $f \in C^\infty(M), s \in L^2(S), x\in M$. 
More importantly, the latter action interacts in a bounded manner with 
the Dirac operator $D_g$ in the sense that the commutator of $D_g$ with 
the action of each $f\in C^\infty(M)$ is a bounded operator as  
$[D_g, f] =D_g f - f D_g = c(df)$, where $c(df)$ denotes the Clifford 
multiplication on $S$ by the de Rham differential of $f$.   
By construction, the Dirac operator $D_g$ depends heavily 
on the Riemannian metric $g$ on $M$. It is known from classical 
facts in spectral geometry that the spectral dimension of 
the spectral triple $\left ( C^\infty(M), L^2(S), D_g \right )$ is equal 
to the dimension of the manifold $M$, and that the important local curvature 
related information can be detected in the 
small time asymptotic expansions of the form 
\begin{equation} \label{asympexp}
\textnormal{Tr} \left (f \exp(-t D_g^2) \right )  \,\,\, 
\sim_{t \to 0^+}  
\,\,\, 
t^{- \textnormal{dim} M/2}  
\sum_{j=0}^\infty t^j \int_M f(x) \, a_{2j}(x) \, dx, \qquad f \in C^\infty(M), 
\end{equation} 
where the densities $a_{2j}(x) \, dx$ are uniquely determined by 
the Riemann curvature tensor and its contractions and covariant 
derivatives, cf. \cite{MR2371808}.

The importance of the Dirac operator, for using the tools of 
Riemannian geometry in studying noncommutative algebras, 
is fully illustrated in \cite{MR3032810} by showing that the 
Dirac operator $D_g$ contains the full metric information. That is, 
it is shown that any spectral triple $(A, \mathcal{H}, D)$ whose 
algebra $A$ is commutative,  and satisfies suitable  
conditions, is equivalent to the spectral triple 
$\left ( C^\infty(M), L^2(S), D_g \right )$ of a Riemannian spin$^c$ 
manifold. In fact, following the Gauss-Bonnet theorem for the 
noncommutative two torus proved in \cite{MR2907006} and 
its extension in \cite{MR2956317}, the calculation and 
conceptual understanding of the local curvature terms in 
noncommutative geometry has attained remarkable attention 
in recent years 
\cite{MR3194491, MR3148618, MR2947960, MR3359018, MR3369894, 2016arXiv161109815C, MR3540454, MR3402793, MR3705386, MR3825195}. 
In these studies, the noncommutative 
local geometric invariants are detected in the analogs of 
the small time asymptotic expansion 
\eqref{asympexp} written for spectral triples.

\subsection{Twisted spectral triples}   
\label{twistedspectripsec}

It turns out that the notion of  a spectral triple $(A, \mathcal{H}, D)$ is suitable for 
studying algebras that possess a non-trivial trace, and a {\em twisted} notion of 
spectral triples is proposed in \cite{MR2427588} to incorporate 
algebras that do not have this property. The reason is that 
if $(A, \mathcal{H}, D)$ is a spectral triple with spectral dimension $d$, then using the 
Dixmier trace $\textnormal{Tr}_\omega$, one can define the 
linear function $\phi : A \to \mathbb{C}$  by 
$\phi(a) = \textnormal{Tr}_\omega(a |D|^{-d})$, which turns out to be a trace, 
under the minimal regularity assumption that the commutators $[|D|, a]$ are also 
bounded for any $a \in A$. 
The main reason for the trace property $ \phi(ab)=\phi(ba), a, b \in A,$ is that 
for any $a \in A$, the commutator $[|D|^{-d},a]=|D|^{-d}a - a |D|^{-d}$ belongs to the 
ideal of compact operators on which the Dixmier trace $\textnormal{Tr}_\omega$ 
vanishes. Therefore, in order to incorporate algebras coming from type III 
examples in the Murray-von Neumann classification of algebras, which do not possess a 
non-trivial trace functional, the notion of a {\em twisted spectral triple} was introduced in 
\cite{MR2427588}. The difference is that they change the definition of 
a spectral triple $(A, \mathcal{H}, D)$ by introducing a twist 
by an algebra automorphism $\sigma : A \to A$, and by requiring that instead of the ordinary 
commutators, the twisted commutators of the form $[D, a]_\sigma := Da - \sigma(a) D$, $a \in A$, 
extend to bounded operators on the Hilbert space $\mathcal{H}$. 
We note that it is natural to assume the 
mild regularity condition that the twisted commutators with $|D|$ are also bounded operators 
and to consider a grading: a bounded selfadjoint operator $\gamma$ on $\mathcal{H}$ 
that squares to identity, commutes with the action of $A$ and anti-commutes with $D$.   
The boundedness of the twisted commutators is then used to observe that for any $a \in A$, 
the operator $|D|^{-d} a - \sigma^{-d}(a)|D|^{-d}$ is in the kernel of the Dixmier trace (which is 
an ideal), hence, the linear functional $a \mapsto \textnormal{Tr}_\omega (a |D|^{-d})$, $a \in A$ is a twisted 
trace.  That is, $\textnormal{Tr}_\omega (a \,b |D|^{-d}) = \textnormal{Tr}_\omega ( b \, \sigma^{-d}(a) |D|^{-d})$, for any $a, b \in A$.

It is emphasized in \cite{MR2427588} on the important 
issue about twisted spectral triples that their Connes-Chern character lands in 
the ordinary cyclic cohomology whose pairing 
with the $K$-theory of the algebra can be realized as the index of a Fredholm operator. 
That is, let $(A, \mathcal{H}, D, \sigma \in \textnormal{Aut}(A))$ be a twisted spectral 
triple of spectral dimension $d$. Then by passing to the phase $F=D/|D|$ one 
arrives at an ordinary {\em Fredholm module} $(A, \mathcal{H}, F)$ with the 
{\em Connes-Chern character} \cite{MR0823176},  
\begin{equation}
\label{CCcharacter}
\Phi(a_0, a_1, \dots, a_d) 
=
\textnormal{Tr}\left ( \gamma F [F, a_0] [F, a_1] \cdots [F, a_d]  \right ), \qquad a_0, a_1, \dots a_d \in A. 
\end{equation}
This multilinear functional is a {\em cyclic cocycle}, 
it pairs with the $K$-theory of $A$ as the Fredholm index 
of an operator, and the result of the pairing depends only on 
the cyclic cohomology class of $\Phi$ and the $K$-theory class of any chosen 
idempotent \cite{MR0823176, MR2427588}. 
However, for the purpose of explicit calculations, one needs to have a local 
formula that is cohomologous to $\Phi$ in cyclic cohomology or equivalently 
in the $(b, B)$-bicomplex. For ordinary spectral triples (when the 
automorphism $\sigma$ is the identity), the local formula of Connes and 
Moscovici provides the desired formula in the $(b, B)$-bicomplex 
\cite{MR1334867}, see also \cite{MR2036597}. 
As a first step towards a local formula 
for twisted spectral triples, in \cite{MR2427588} they have shown that there is a 
{\em Hochschild cocycle} associated with  twisted spectral triples, which is defined by  
\begin{equation}
\label{localhoch}
\Psi(a_0, a_1, \dots, a_d) 
=
\textnormal{Tr}_\omega \left ( \gamma a_0 [D, \sigma^{-1}(a_1)]_\sigma 
[D, \sigma^{-2}(a_2)]_\sigma \cdots [D, \sigma^{-d}(a_d)]_\sigma |D|^{-d} \right ), 
\end{equation}
for $a_0, a_1, \dots a_d \in A$.

Explicit calculations are often possible with this multi-linear 
map $\Psi$, as for example one can use the trace theorem 
of \cite{MR0953826} and its extension to noncommutative tori 
\cite{MR3004814} 
to write it in terms of local formulas. However, the issue is that $\Psi$ is only 
a Hochschild coccyle and in general it is not cyclic, therefore it does not pair 
with the $K$-theory (see \cite{MR1789831, MR2838682} for the 
relation between the Hochschild cocycle and Connes-Chern 
character in Hochschild cohomology).  Finding a twisted version 
of the local index formula of \cite{MR1334867} has so far proved 
to be a challenging problem, and it has only been done for the special 
examples of scaling automorphisms in \cite{MR2732069}. 
For the treatment of the problem in twisted cyclic cohomology one 
can refer to 
\cite{2011arXiv1111.6546K, MR2888978, MR2770560, MR3275024, MR3316712}. 
We elaborate further on index theoretic 
aspects of the present paper and its relation with 
classical results and results in noncommutative geometry in 
\S \ref{IndexThmSec}.

\subsection{Noncommutative conformal geometry}
\label{NCconformalsec}

As a motivating example it is shown in  
\cite{MR2427588}  that the twisted notion of a 
spectral triple arises naturally in noncommutative conformal 
geometry. They consider the fact that on a Riemannian spin manifold, 
the Dirac operator of the conformal perturbation $g' = e^{-4h} g$ of a 
metric $g$ is unitarily equivalent to $e^h D e^h$, where $D$ is the 
Dirac operator of $g$. Thus, starting from a spectral triple $(A, \mathcal{H}, D)$ 
they use a selfadjoint element $h \in A$ to encode the conformal perturbation 
of the metric in the operator $D'=e^h D e^h$. However, when $A$ is noncommutative, 
$(A, \mathcal{H}, D')$ is not a spectral triple any more, and one needs the twist 
$\sigma(a)=e^{2h} a e^{-2h}$, $a \in A$, to have bounded twisted commutators 
$[D', a]_\sigma.$

In fact, twisted spectral triples of this nature can 
be constructed in a more intrinsic manner as in 
\cite{MR2907006} for the noncommutative two torus 
with a conformally flat metric, and as in the extension of this construction 
in  \cite{MR3458156} to ergodic $C^*$-dynamical 
systems, see also \cite{MR3500823}. Because of its 
intimate relevance to  the present work, we review here the 
construction in \cite{MR2907006}.

Viewing the canonical normalized trace  $ \tau : \CNT \to \mathbb{C} $ 
on the noncommutative two torus as the volume form of the flat metric, an 
element $h=h^* \in \SNT$ is fixed and the positive invertible element 
$e^{-h} \in \SNT$ is used as a conformal factor to perturb the flat metric 
conformally. The volume form of the curved metric is given by the linear functional  
$\varphi : \CNT \to \mathbb{C} $ defined by 
$\varphi (a) = \tau (a e^{-h}), a \in \CNT$. This linear functional 
turns out to be a KMS state with a 1-parameter group of automorphisms. 
The Dirac operator of the curved metric is constructed by using the following analogs 
of the Dolbeault operators: 
\[
\partial = \delta_1 +i \delta_2, \qquad \bar \partial = \delta_1 - i \delta_2. 
\]
For conceptual details of the notion of complex structure 
in noncommutative geometry we refer the 
reader to \cite{MR1303779}, see also \cite{MR2773332}.
Let $\mathcal{H}^+$ be the Hilbert space obtained by the GNS construction 
from $(\CNT, \varphi)$ and consider 
\[
\partial_\varphi = \partial: \mathcal{H}^+ \to \mathcal{H}^-,
\] 
where  $\mathcal{H}^{-}$, 
the analogue of $(1, 0)$-forms, is the Hilbert space completion of 
finite sums $\sum a \partial(b), a, b \in \SNT $, 
with respect to the inner product 
\[
\langle a \partial b, c \partial d \rangle = \tau (c^* a (\partial b) (\partial d)^*). 
\]
In this setting the Dirac operator is defined on the 
Hilbert space $\mathcal{H}= \mathcal{H}^+ \oplus \mathcal{H}^-$ as:  
\[ 
D=
\left(\begin{array}{c c}
0 & \partial_\varphi^* \\
\partial_\varphi & 0
\end{array}\right)
: \mathcal{H} \to \mathcal{H},
\] 
where the adjoint $\partial_\varphi^*$ of $\partial_\varphi$ is seen to
be given by 
\cite{MR2907006} 
\[
\partial_\varphi^*(a)= \bar \partial (a) k^2, \qquad \partial_\varphi^* = R_{k^2} \bar \partial, 
\] 
where $k=\exp(h/2)$, and $R_{k^2}$ denotes the right multiplication by $k^2$.
The crucial point is that the action of $\CNT$ on $\mathcal{H}$ gives rise to an 
ordinary spectral triple while the action of the opposite algebra 
$\CNT^{op}$ of $\CNT$   leads to a twisted spectral triple  
triple  \cite{MR2907006}. The main reason for this phenomena is that 
the action of $\CNT$ on $\mathcal{H}^+$ induced by left multiplication is 
a $*$-representation of the algebra, however, the action of $\CNT^{op}$ 
on $\mathcal{H}^+$ induced by right multiplication is not a $*$-representation 
and requires a modification which can be provided by an algebra automorphism. 
That is for  $a \in \CNT^{op}$ and  $\xi^+ \in \mathcal{H}^+$, one has to define  
\[
a^{op} \cdot \xi^+ = a_+ (\xi^+) = \xi^+ k^{-1} a k,   
\]
hence the appearance of the automorphism of $\CNT^{op}$ given by 
\[
\sigma(a^{op})= k^{-1} a k, \qquad a \in \CNT, 
\]
Note that for any $\xi^- \in \mathcal{H}^-$ one stays with $a^{op} \cdot \xi^- = \xi^- a$ 
since the inner product on $(1,0)$-forms is not affected 
by the conformal perturbation.  This provides the ingredients of a twisted spectral triple, namely 
for any $a \in \CNT^{op}$ the twisted commutator $D a^{op} - \sigma(a^{op}) D$ is 
a bounded operator on $\mathcal{H}  = \mathcal{H}^+ \oplus \mathcal{H}^-$ 
(while the ordinary commutators are not necessarily bounded). 

We also consider the grading $\gamma: \mathcal{H}  = \mathcal{H}^+ \oplus \mathcal{H}^- \to \mathcal{H}  = \mathcal{H}^+ \oplus \mathcal{H}^- $ given by 
\[
\gamma = \left(\begin{array}{c c}
1 & 0 \\
0 & -1
\end{array}\right),  
\]
and for convenience write the action of any $a \in \SNT^{op}$ on $\mathcal{H}= \mathcal{H}^+ \oplus \mathcal{H}^-$ as 
\[
\pi (a) = \left(\begin{array}{c c}
a_+ & 0 \\
0 & a_- \end{array}\right),
\]
where, clearly, the operators $a^\pm: \mathcal{H}^\pm \to \mathcal{H}^\pm$ are induced by
\[
a_+ (\xi^+) =  \xi^+ k^{-1} a k, \qquad a_+ = R_{k^{-1} a k} = R_{\sigma(a)}, 
\]
\[
a_- (\xi^-) =  \xi^- a, \qquad a_- = R_{a}. 
\]
We also mention that the automorphism $\sigma$ satisfies the natural condition 
\[
\sigma(a)^* = \sigma^{-1}(a^*), \qquad \sigma^{-1} \circ * = * \circ \sigma.
\]

\subsection{Dirac operator twisted by a module (vector bundle)}
\label{Lplusminussec}

Twisting the Dirac operator  with a hermitian vector bundle by means 
of a hermitian connection is a standard technique in differential geometry.  
In the noncommutative setting, this can be carried out by compressing the operator 
$F=D/|D|$ of the Fredholm module of a twisted spectral triple 
$(A, \mathcal{H}, D, \sigma  \in \rm{Aut}(A))$ by an idempotent $e$ as in \cite{MR2427588}.  However, in order to implement 
the twisting with the operator $D$, one has to consider a twisted compression \cite{MR3500845}. That is, 
the index of the operator $\sigma(e) D e $ is important to be calculated. Note that in general 
$e$ is an idempotent matrix in $M_q(A)$ for some positive integer $q$, but for our purposes 
for the noncommutative two torus, we can consider 
$e \in \SNT$, see \cite{MR0587369, MR0703981}.  Thus, we need to consider the operator 
\[
\sigma(e) D e = \left(\begin{array}{c c}
0 & \sigma(e_+) \partial_\vphi^* e_- \\
\sigma(e_-) \partial_\vphi e_+  & 0 \end{array}\right) : 
e \mathcal{H} \to \sigma(e) \mathcal{H},  
\]
and calculate a local formula for the index of the operator 
\begin{equation}
\label{twistedDiracopnct}
D_{e, \sigma}^+ := 
\sigma(e_-) \partial_\vphi e_+ : e_+ \mathcal{H}^+ 
\to
\sigma(e_-) \mathcal{H}^-.  
\end{equation}
We will shortly explain in \S \ref{IndexThmSec} that this can be done, using the McKean-Singer 
index formula, by calculating the constant terms in the small time heat kernel expansions of the 
form \eqref{asmpexpnct} for the operators $(\De)^* \De$ and $ \De (\De)^*$. We calculate the relevant terms in the 
heat expansions by employing 
the pseudodifferential calculus and the method illustrated in \S \ref{heatkerexpansiongensec}. Therefore, the first step is to 
calculate the pseudodifferential symbols of these operators. 
It will be convenient for us to use the inclusions 
\[
\iota_+: e_+ \Hc^+ \to \Hc^+, \qquad 
\iota_-: \sigma(e_-) \Hc^- \to \Hc^-, 
\]
and the fact that 
$
\iota_+^* = e_+, \sigma(e_-)^* = \iota_-. 
$

We have: 
\[
\De = \sigma(e_-) \, \pv \, \iota_+ 
: e_+ \mathcal{H}^+ \xrightarrow{\iota_+} 
\Hc^+ \xrightarrow{\pv} \Hc^-
\xrightarrow{\sigma(e_-)} \sigma(e_-)\Hc^-.  
\]
The operator 
\[
(\De)^* \De = \iota_+^* \pv^* \sigma(e_-)^* \sigma(e_-) \pv \iota_+  
= e_+ \pv^* \iota_- \sigma(e_-) \pv \iota_+: e_+ \mathcal{H}^+  \to e_+ \mathcal{H}^+ 
\]
is the restriction of the operator 
\[
L_1^+ = e_+ \pv^*  \sigma(e_-) \pv : \Hc^+ \to \Hc^+
\]
to the subspace $e_+ \Hc^+$, which means that we can write: 
\[ 
L_1^+ = 
\left(\begin{array}{c c}
(\De)^* \De & * \\
0  & 0 \end{array}\right), 
\]
which yields 
\[ 
\exp(-t L_1^+) = 
\left(\begin{array}{c c}
\exp \left (-t \left (\De \right)^* \De \right)& * \\
0  & 0 \end{array}\right), 
\]
and
\[
\Tr \left ( \exp(-t L_1^+) \right ) = 
\Tr \left ( \exp \left (-t \left (\De \right)^* \De \right) \right ).  
\]
We have a unitary map $W : \mathcal{H}_0 \to \Hc^+$ defined by 
\[
W(a) = a k, \qquad W = R_k, 
\] 
and use it to work with the anti-unitarily equivalent 
operator
\[
L^+ = JW^* L_1^+ WJ : \Hc_0 \to \Hc_0,  
\]
where the operator $J$ is induced by the involution of $\CNT$. 
We have: 
\[
L_1^+ = R_{\sigma(e)} \,R_{k^2}\, \bar \partial \, R_{\sigma(e)}\, \partial, 
\]
\[
W^* L_1^+ W = R_{k^{-1}} \, R_{\sigma(e)} \,R_{k^2}\, \bar \partial \, R_{\sigma(e)}  \, \partial \, R_{k}, 
\]
and 
\begin{eqnarray*}
L^+ &=& J W^* L_1^+ W J \\
 &=&  JR_{k^{-1}} \, R_{\sigma(e)} \,R_{k^2}\, \bar \partial \, R_{\sigma(e)} \, \partial \, R_{k} J \\
&=& (JR_{k^{-1}}J) \,(J R_{\sigma(e)}J) \, (JR_{k^2} J)\, (J\bar \partial J)\, (JR_{\sigma(e)}J) \, (J\partial J) \, (J R_{k} J) \\
&=& k^{-1} \, \sigma(e) \, k^2 \, (-\partial ) \, \sigma(e) \, (- \bar \partial )\, k \\
&=& k^{-1} \, \sigma(e) \, k^2 \, \partial  \, \sigma(e) \,  \bar \partial \, k. 
\end{eqnarray*}

We can now calculate the pseudodifferential symbol of the operator $L^+$. 
\begin{lem}
\label{Lplussymbollem}
We have $\sigma_{L^+}(\xi) = p^+_2(\xi) + p^+_1(\xi) + p^+_0(\xi), $ where 
 \[
 p^+_2(\xi) = k^{-1}\, \sigma(e)\, k^2\,  \sigma(e) \,k \, \left  (  \xi_1^2 +  \xi_2^2 \right ) =k^{-2} e k^2 e k^2 \left  (  \xi_1^2 +  \xi_2^2 \right ),  
 \]
 \begin{eqnarray*}
 p^+_1(\xi) &=& k^{-1}\, \sigma(e)\, k^2\, \Big (  
 \big ( 
 2 \sigma(e) \, \del_1(k) + \del_1(\sigma(e)) k + i \del_2(\sigma(e)) k
 \big ) \xi_1 \\
&& + 
 \big (
 2 \sigma(e) \, \del_2(k) + \del_2(\sigma(e)) k - i \del_1(\sigma(e)) k 
 \big ) \xi_2 \Big ), 
 \end{eqnarray*}
 \begin{eqnarray*}
 p^+_0(\xi)&=& k^{-1} \, \sigma(e) \, k^2 \Big ( \del_1(\sigma(e)) \, \del_1(k) + \sigma(e) \, \delta_1^2(k) + \del_2(\sigma(e)) \, \del_2(k) + \sigma(e)\, \del_2^2(k) \\
 &&+ i \big ( \del_2(\sigma(e))\, \del_1(k)  - \del_1(\sigma(e)) \, \del_2(k)\big  ) \Big ). 
 \end{eqnarray*}
 \begin{proof} 
 It follows from the fact that 
\[
L^+= k^{-1} \, \sigma(e) \, k^2 \, \partial  \, \sigma(e) \,  \bar \partial \, k : \Hc_0 \to \Hc_0, 
\]
and for any $a \in \SNT \subset \Hc_0$ we have 
\begin{eqnarray*}
&&\left ( \partial  \, \sigma(e) \,  \bar \partial \, k\right ) (a) \\
&=& \sigma(e) k \,\del_1^2(a) + \sigma(e) k \, \del_2^2(a) \\
&& + \sigma(e) \del_1(k) \del_1(a) + \del_1(\sigma(e) k) \del_1(a)+\sigma(e) \del_2(k) \del_2(a)+ \del_2(\sigma(e) k) \del_2(a) \\ 
&&+ i \Big (  \sigma(e) \del_1(k) \del_2(a) - \sigma(e) \del_2(k) \del_1(a) + \del_2(\sigma(e) k) \del_1(a)  - \del_1(\sigma(e) k) \del_2(a) \Big ) \\ 
&&+ \del_1(\sigma(e) \del_1(k) )a + \del_2(\sigma(e) \del_2(k) )a+ i \Big ( \del_2(\sigma(e) \del_1(k) )a - \del_1(\sigma(e) \del_2(k) )a\Big ).  
\end{eqnarray*}
 \end{proof}
\end{lem}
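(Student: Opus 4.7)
The plan is to avoid invoking the full asymptotic composition formula for pseudodifferential symbols and instead exploit the fact that $L^+$ is a genuine differential operator of order $2$ with coefficients in $\SNT$. For such an operator written in the form $\sum a_{j_1 j_2}\delta_1^{j_1}\delta_2^{j_2}$, the Connes symbol is simply the polynomial $\sum a_{j_1 j_2}\xi_1^{j_1}\xi_2^{j_2}$. So the task reduces to expanding $L^+(a)$ for a generic $a\in\SNT$ and collecting terms by the order of the derivative that lands on $a$.

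First, I would unpack $\partial=\delta_1+i\delta_2$, $\bar\partial=\delta_1-i\delta_2$ and use the fact that $\delta_1,\delta_2$ commute, which yields $\partial\bar\partial = \delta_1^2+\delta_2^2$. Then, reading the composition $L^+=k^{-1}\sigma(e)k^2\,\partial\,\sigma(e)\,\bar\partial\,k$ from right to left, I would apply the Leibniz rule twice to compute $(\partial\,\sigma(e)\,\bar\partial\,k)(a)$: first $\bar\partial(ka)=\bar\partial(k)\,a+k\,\bar\partial(a)$, then $\sigma(e)$ absorbs on the left, and finally $\partial$ distributes over the resulting two summands by another Leibniz expansion. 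This is precisely the displayed expression at the end of the lemma statement.

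Next I would sort the resulting sum by derivative order. The second-order contribution is $\sigma(e)k\bigl(\delta_1^2(a)+\delta_2^2(a)\bigr)$, which after left-multiplication by $k^{-1}\sigma(e)k^2$ and the replacement $\delta_i\mapsto\xi_i$ gives $p_2^+(\xi)=k^{-1}\sigma(e)k^2\sigma(e)k\,(\xi_1^2+\xi_2^2)$; to obtain the equivalent form $k^{-2}ek^2ek^2\,(\xi_1^2+\xi_2^2)$ I would simply substitute $\sigma(e)=k^{-1}ek$ and cancel. For the first-order part I would collect the coefficients of $\delta_1(a)$ and $\delta_2(a)$ coming from $\sigma(e)\bar\partial(k)\,\partial(a)$ and $\partial(\sigma(e)k)\,\bar\partial(a)$; after expanding $\partial(\sigma(e)k)=\delta_1(\sigma(e))k+\sigma(e)\delta_1(k)+i\delta_2(\sigma(e))k+i\sigma(e)\delta_2(k)$ and combining, the cross terms $\pm i\sigma(e)\delta_i(k)$ add constructively or cancel in exactly the way needed to produce the $2\sigma(e)\delta_i(k)$ and $\pm i\delta_j(\sigma(e))k$ pieces in the stated $p_1^+$. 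The zero-order part is $\partial(\sigma(e)\bar\partial(k))\,a$; one more Leibniz expansion and separation into real/imaginary parts reproduces $p_0^+$.

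The substantive content is purely bookkeeping — tracking signs, factors of $i$, and the correct grouping of the eight terms that arise from the two Leibniz steps. There is no genuine analytic obstacle: no parametrix, no $R_\lambda$, no asymptotic composition is needed here, because $L^+$ is already a polynomial in $\delta_1,\delta_2$ with smooth coefficients. The only small subtlety worth flagging in the write-up is that the left-multiplication operators $R_{k^{-1}}$, $R_{\sigma(e)}$, $R_{k^2}$ (which become \emph{left} multiplications after conjugation by $J$, as derived just above the lemma) contribute zero-order symbols, so they pass through the symbol calculation simply by being multiplied on the left of the collected coefficients; this is what produces the common prefactor $k^{-1}\sigma(e)k^2$ in each of $p_2^+,p_1^+,p_0^+$.
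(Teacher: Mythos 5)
Your proposal is correct and matches the paper's own argument: the paper likewise treats $L^+=k^{-1}\sigma(e)k^2\,\partial\,\sigma(e)\,\bar\partial\,k$ as a differential operator, expands $(\partial\,\sigma(e)\,\bar\partial\,k)(a)$ by two applications of the Leibniz rule, and reads off $p_2^+,p_1^+,p_0^+$ as the coefficients of the second-, first-, and zeroth-order derivatives of $a$ with the common left prefactor $k^{-1}\sigma(e)k^2$. Your bookkeeping of the $\pm i$ cross terms and the substitution $\sigma(e)=k^{-1}ek$ for the alternate form of $p_2^+$ are exactly what the paper's (terser) proof leaves implicit.
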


Similarly, the operator 
\[
 \De (\De)^*= \sigma(e_-) \pv \iota_+  \iota_+^* \pv^* \sigma(e_-)^*  
= \sigma(e_-) \pv e_+ \pv^* \iota_- : \sigma(e_-) \mathcal{H}^-  \to \sigma(e_-) \mathcal{H}^- 
\]
is equal to the restriction of the operator 
\[ 
L_1^- = \sigma(e_-) \pv e_+ \pv^* : \Hc^- \to \Hc^- 
\]
to the subspace $\sigma(e_-) \Hc^-$. Therefore 
\[ 
L_1^- = 
\left(\begin{array}{c c}
 \De  (\De)^*& * \\
0  & 0 \end{array}\right), 
\]
which yields, 
\[
\exp(-tL_1^-) = 
\left(\begin{array}{c c}
 \exp \left (-t \De  \left (\De \right)^* \right) & * \\
0  & 0 \end{array}\right),
\]
and 
\[
\Tr \left (  \exp(-tL_1^-)  \right ) =  \Tr \left ( \exp \left (-t \De  \left (\De \right)^* \right) \right ). 
\]
The operator $L_1^-: \Hc^- \to \Hc^- $ is anti-unitarily equivalent to the operator 
\[
L^-= J L_1^- J: \Hc_0 \to \Hc_0.
\]

We have: 
\[
L_1^- = R_{\sigma(e)} \, \partial \, R_{\sigma(e)} \, R_{k^2} \, \bar \partial, 
\]
and 
\begin{eqnarray*}
L^- &=&   J L_1^- J \\
&= & J\, R_{\sigma(e)} \, \partial \, R_{\sigma(e)} \, R_{k^2} \, \bar \partial \, J \\
& = & (J\, R_{\sigma(e)} J) \, (J \partial  J)\, (J R_{\sigma(e)} J) \, (JR_{k^2} J)\, (J\bar \partial \, J) \\
&=&  \sigma(e) \, (- \bar \partial) \, \sigma(e) \, k^2 (- \partial) \\ 
&=&  \sigma(e) \, \bar \partial \, \sigma(e) \, k^2 \, \partial. 
\end{eqnarray*}

We can now present the pseudodifferential symbol of $L^-$. 
\begin{lem}
\label{Lminussymbollem}
We have $\sigma_{L^-}(\xi) = p^-_2(\xi) + p^-_1(\xi) $ where 
\[
p^-_2(\xi) = \sigma(e)^2 k^2 (\xi_1^2 + \xi_2^2) = k^{-1} e k^3  (\xi_1^2 + \xi_2^2), 
\]
\begin{eqnarray*}
p^-_1(\xi) = \sigma(e) \Big ( \del_1(\sigma(e) k^2) \xi_1 + \del_2(\sigma(e) k^2) \xi_2  
+ i \big (  \del_1(\sigma(e) k^2) \xi_2 - \del_2(\sigma(e) k^2) \xi_1 \big ) \Big ).  
\end{eqnarray*}
\begin{proof}
It follows from the fact that 
\[
L^-= \sigma(e) \, \bar \partial \, \sigma(e) \, k^2 \partial: \Hc^- \to \Hc^-, 
\]
and for any $a \in \SNT$ we have: 
\begin{eqnarray*}
\left ( \bar \partial \, \sigma(e) \, k^2 \partial \right ) (a) &=&   \sigma(e) k^2 \del_1^2(a) + \sigma(e) k^2 \del_2^2(a) \\
&& + \del_1(\sigma(e) k^2) \del_1(a) + \del_2(\sigma(e) k^2) \del_2 (a)  \\
&& - i  \del_2(\sigma(e) k^2) \del_1 (a) + i  \del_1(\sigma(e) k^2) \del_2 (a). 
\end{eqnarray*}

\end{proof}

\end{lem}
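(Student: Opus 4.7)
The plan is to compute $L^- a$ for a generic smooth element $a \in \SNT$ by expanding from the innermost operator outward, and then read off the symbol by replacing each $\delta_1^{i_1} \delta_2^{i_2}(a)$ appearing on the right by $\xi_1^{i_1} \xi_2^{i_2}$ in the coefficient. Concretely, I first apply $\partial = \delta_1 + i\delta_2$ to $a$, then multiply on the left by the zeroth-order operator of multiplication by $\sigma(e) k^2$, then apply $\bar\partial = \delta_1 - i\delta_2$ via the Leibniz rule, and finally premultiply by $\sigma(e)$.

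The essential step is the Leibniz expansion of $\bar\partial [\sigma(e) k^2 \partial(a)]$. Writing $b = \sigma(e) k^2$ for brevity, this produces eight terms, of which two are pure second derivatives $b \delta_1^2(a)$ and $b \delta_2^2(a)$, two are mixed second derivatives $i b \delta_1 \delta_2(a)$ and $-i b \delta_2 \delta_1(a)$, and four are first derivatives with coefficients built from $\delta_j(b)$. The mixed second-derivative terms cancel because $\delta_1$ and $\delta_2$ commute on $\SNT$ (both being infinitesimal generators of the commuting $\T^2$-action), leaving a pure Laplace-type principal part $b(\delta_1^2 + \delta_2^2)(a)$. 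Collecting the first-order terms produces exactly the form claimed for $p^-_1$, and there is no order-zero contribution because $\partial$ annihilates constants, so $a$ itself never appears undifferentiated.

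For the alternative closed form of $p^-_2$, I use $\sigma(e) = k^{-1} e k$ together with idempotence $e^2 = e$: this gives $\sigma(e)^2 = k^{-1} e k \cdot k^{-1} e k = k^{-1} e^2 k = k^{-1} e k$, so $\sigma(e)^2 k^2 = k^{-1} e k^3$, as stated. Finally, premultiplication by $\sigma(e)$ on the left does not introduce any new derivatives and simply multiplies the whole symbol on the left, giving the prefactor $\sigma(e)$ visible in both $p^-_2$ and $p^-_1$.

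I do not anticipate a genuine obstacle here; the calculation is strictly a bookkeeping exercise once the Leibniz rule and the commutativity $[\delta_1, \delta_2] = 0$ are in hand. The main care needed is tracking the four $\pm i$ signs arising from the two $i \delta_2$ summands and verifying that the mixed $\delta_1 \delta_2$ cross terms cancel, as they must in order for the resulting operator to be an elliptic Laplace-type operator with scalar principal symbol. Compared with the proof of Lemma \ref{Lplussymbollem}, this computation is notably shorter, since $L^-$ has only a single conformal factor $k^2$ sandwiched between the derivations rather than two separate factors of $k$ and $\sigma(e)$ that each require their own Leibniz expansion.
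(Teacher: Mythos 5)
Your proposal is correct and takes essentially the same approach as the paper: a direct Leibniz expansion of $\bar \partial \left( \sigma(e) k^2 \partial(a) \right)$ for $a \in \SNT$, with the mixed $\delta_1\delta_2$ terms cancelling and the identity $\sigma(e)^2 k^2 = k^{-1}ek^3$ following from $e^2 = e$. Your additional remarks on the sign bookkeeping and the absence of an order-zero term are accurate but not needed beyond what the paper records.
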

\subsection{Index theorems} 
\label{IndexThmSec}

We dedicate this subsection to review some fundamental techniques 
and results about the heat equation proof of the index theorem,  
which play an important role in our approach in the noncommutative 
setting in the present paper. For a complete account of the details 
one can refer to \cite{MR1031992,  MR1670907,  MR0783634} 
and references therein.

We first explain the McKean-Singer index theorem  \cite{MR0217739}. 
Assume that $M$ is a 
compact manifold of even dimension $m$ and that $V$ and $W$ are hermitian 
vector bundles over $M$. Also let $P: C^\infty(V) \to C^\infty(W)$ be an elliptic differential 
operator from the smooth sections of $V$ to those of $W$, and for simplicity 
assume that $P$ is of order 1. By constructing suitable Sobolev spaces out 
of $V$ and $W$, it is known that $P$ extends to a bounded operator between 
Sobolev spaces, which is a Fredholm operator as well. However, in order to deal with the 
Fredholm index of $P$, one can safely only consider the smooth sections, since 
the {\em regularity} stemming from the ellipticity of $P$ ensures that all the 
sections in the kernel of (the extension of) $P$ are smooth, see for example 
\cite{ MR0783634}. Then, in order to calculate the index,  
\[
\textnormal{Ind}(P) = \textnormal{Dim Ker}(P) - \textnormal{Dim Ker}(P^*), 
\]
where $P^*$ is the adjoint of $P$, one can use the heat expansion as follows. 
The McKean-Singer index theorem states that for any $t>0:$
\begin{equation} \label{MCKeanSingerFormula}
\textnormal{Ind}(P) 
=
\textnormal{Trace}\left (  \exp(-t P^* P) \right ) 
- 
\textnormal{Trace}\left (  \exp(-t P P^*) \right ).  
\end{equation}

On the other hand there are small time asymptotic expansions of the form \eqref{asympexp}, 
which depend on the local symbols. 
That is,  there are densities $a_{2j}(x, P^*P) \, dx$ and $a_{2j}(x, PP^*) \, dx$ on $M$ obtained locally 
from the pseudodifferenital symbols of $P^*P$ and $PP^*$  such that 
\begin{equation} \label{asympexp2}
\textnormal{Trace} \left ( \exp(-t P^*P) \right )  \,\,\, 
\sim_{t \to 0^+}  
\,\,\, 
t^{- m/2}  
\sum_{j=0}^\infty t^j \int_M  a_{2j}(x, P^*P) \, dx, 
\end{equation} 
and 
\begin{equation} \label{asympexp3}
\textnormal{Trace} \left ( \exp(-t PP^*) \right )  \,\,\, 
\sim_{t \to 0^+}  
\,\,\, 
t^{- m/2}  
\sum_{j=0}^\infty t^j \int_M a_{2j}(x, PP^*) \, dx. 
\end{equation} 
Since the index of $P$ in \eqref{MCKeanSingerFormula} is independent of $t$, 
after writing the expansions in the right hand side using \eqref{asympexp2}, 
\eqref{asympexp3}, the only term that is independent of $t$ turns out to match with 
the index, hence: 
\begin{equation}
\label{localformula0}
\textnormal{Ind}(P) 
= \int_M \left ( a_{m}(x, P^* P) - a_{m}(x, PP^*)  \right )\, dx 
\end{equation}
This shows that there is a local formula for the index, and the celebrated Atiyah-Sinder 
index theorem identifies the density that integrates to the index in terms of characteristic 
classes when $P$ is an important geometric operator. For example when $P$ is the 
de Rham operator $d+d^*$ mapping the even differential forms to the odd ones, the index 
is the Euler characteristic of the manifold, which is equal to the integral of the 
pfaffian of the matrix of curvature 2-forms.  In general, the index theorem states that if $D_E$ is the 
Dirac operator of a spin structure $D$ with coefficients in a vector bundle $E$, then 
the $\hat{ \mathcal{A}}$-class of the tangent bundle $TM$ and the Chern character 
of $E$ give the index by the formula
\begin{equation}
\textnormal{Ind}(D_E) = \int_M \hat{ \mathcal{A}}(TM) \, ch(E). 
\end{equation}
For various proofs of the index theorem one can refer to  
\cite{MR0157392, MR0236950, MR0236951, MR0236952, MR1303779, MR1817560, MR0836727, MR0744920, MR0934249}, 
and references therein.

The elliptic theory of differential operators and pseudodifferential calculus operate perfectly in noncommutative settings as developed in \cite{MR0572645}, and analyzed in 
further detail for noncommutative tori in 
\cite{2018JPhCS.965a2042T, 2018arXiv180303575H, 2018arXiv180303580H}.  
Moreover, there is a  crucial need for local index formulas 
for twisted spectral triples as we explained in \S \ref{SpecTripSec}. Therefore, in the present 
paper we take the heat expansion approach to find a local formula for the index 
of the Dirac operator of the twisted spectral triple described in \S \ref{NCconformalsec} 
with coefficients (or twisted by) an auxiliary finitely generated projective module 
on the noncommutative two torus, playing the role of a general vector bundle, 
cf. \cite{MR0587369, MR0703981}.  
We follow closely the setup provided in \cite{MR2427588, MR3500845}, 
and the work has intimate connections with \cite{MR3540454}.

\section{Calculation of a noncommutative local formula for the index}
\label{calculationssec}

In this section we apply the method explained in \S \ref{IndexThmSec},  
for the calculation of a local formula 
for the index of a twisted Dirac operator, 
to the twisted Dirac operator, 
\[
D_{e, \sigma}^+ = \sigma(e_-) \partial_\vphi e_+ : e_+ \mathcal{H}^+ 
\to
\sigma(e_-) \mathcal{H}^-,
\]
of a conformally flat metric on noncommutative two torus $\NT$. 
Based on the discussion following the McKean-Singer formula 
\eqref{MCKeanSingerFormula}, our treatment in \S \ref{Lplusminussec} 
of the operators $(D_{e, \sigma}^+)^* D_{e, \sigma}^+$ and 
$D_{e, \sigma}^+ (D_{e, \sigma}^+)^*$ to 
derive their anti-unitarily equivalents $L^+$ and $L^-$, and 
the formula \eqref{asmpexpnct} for the terms in the heat expansion, we have: 
\begin{equation} 
\label{indDplus}
{\rm Ind}(D_{e, \sigma}^+) 
= 
\tau \left ( a_2 \left ( L^+\right ) 
- 
a_2 \left (  L^- \right) \right ). 
\end{equation}
Thus, our task is now to perform individually for $L^+$ and $L^-$ the recursive procedure that leads to 
the explicit formula \eqref{a_2nformula} and to derive explicit formulas for $a_2 \left ( L^+\right ) $ and 
$a_2 \left ( L^-\right ) $. 

Using the homogeneous components of the pseudodifferential symbols of 
$L^{\pm}$  presented in Lemmas \ref{Lplussymbollem} and \ref{Lminussymbollem}, we perform 
symbolic calculations and use formula \eqref{r_nformula} to calculate 
$b_2(\xi, \lambda, L^{\pm})$. This leads to  lengthy expressions which we calculate 
with computer assistance. The next task is to use \eqref{a_2nformula} to derive 
$a_2(L^{\pm})$, namely 
\[
a_{2}(L^{\pm}) = \frac{1}{2\pi i}  \int_{\R^2} \int_\gamma e^{-\lambda} b_{2}(\xi, \lambda, L^\pm)  \, d\lambda \,d\xi. 
\]
Using a homogeneity argument, see \cite{MR2907006, MR3194491}, one can 
avoid the contour integration in the latter formula by setting $\lambda = -1$ and multiplying 
the final result result by $-1$. More precisely, one has: 
\begin{equation}
\label{a_2Lplusminus}
a_{2}(L^{\pm}) = - \int_{\R^2} b_{2}(\xi, -1, L^\pm)  \,d\xi. 
\end{equation}
Using the trace property of $\tau$ appearing in the formula for the index 
\eqref{indDplus}, after calculating  $b_{2}(\xi, -1, L^\pm)$, we will 
rotate cyclically the multiplicative term $b_0(\xi, -1, L^\pm)$ appearing 
at the very right side of our terms and bring it to the very left side.  We then 
carry out the integration over $\R^2$ by passing to the polar coordinates 
$\xi_1 = r \cos \theta$, $\xi_2 = r \sin \theta$. The angular integration with respect to 
$\theta$, from $0$ to $2 \pi$, can be done in a straight forward manner. However, 
the radial integration with respect to $r$, from $0$ to $\infty$, poses a challenge 
coming from the noncommutativity. This challenge appeared in \cite{MR2907006, MR2947960, MR3194491, MR3148618} as well and was overcome 
by the rearrangement lemma and led to the appearance of the modular automorphism in final formulas. 
In this article we need an even more elaborate version of the rearrangement lemma due to the presence of 
an idempotent in addition to a conformal factor, as we shall see shortly.

\subsection{Computation of $ \tau \left ( a_2 \left ( L^- \right ) \right )$} 
Considering the symbol of $L^-$ written in Lemma \ref{Lminussymbollem} 
and using the recursive formulas \eqref{r_0formula} and \eqref{r_nformula}, we have 
\[
b_0(\xi, -1, L^-)=(\sigma(e)k^2|\xi|^2+1)^{-1}, 
\] 
and up to a right multiplication by $-b_0(\xi, -1, L^-)$, $b_2(\xi, -1, L^-)$ is the sum of 395 terms: 
\begin{center}
\begin{math}
b_2(\xi, -1, L^-)= 
-2\xi_1^2\left(b_0^2\sigma(e)k^2\right)\sigma(e)k\delta_1^2(k)+2i\xi_1^2\left(b_0^2\sigma(e)k^2\right)\sigma(e)k\left(\delta_1\delta_2(k)\right)-4\xi_1^2
\left(b_0^2\sigma(e)k^2\right)\sigma(e)\delta_1(k)\delta_1(k)+2i\xi_1^2\left(b_0^2\sigma(e)k^2\right)\sigma(e)\delta_1(k)\delta_2(k)-2\xi_1^2\left(b_0^2
\sigma(e)k^2\right)\sigma(e)\delta_1^2(k)k+2i\xi_1^2\left(b_0^2\sigma(e)k^2\right)\sigma(e)\delta_2(k)\delta_1(k)+2i\xi_1^2\left(b_0^2
\sigma(e)k^2\right)\sigma(e)\left(\delta_1\delta_2(k)\right)k-\xi_1^2\left(b_0^2\sigma(e)k^2\right)\delta_1^2(\sigma(e)k^2)-\cdots
\end{math}
\end{center}
Passing to polar coordinates and integrating the angular variable, we get terms of the following form,
up to a right multiplication by $-2\pi b_0(\xi, -1, L^-)$, 
\begin{center}
\begin{math}
-2\left(\sigma(e)k^2b_0^2\right)\delta_1(\sigma(e)k^2)\left(\sigma(e)k^2b_0^2\right)\delta_1(\sigma(e)k^2)r^6-2\left(\sigma(e)k^2b_0^2\right)\delta
_2(\sigma(e)k^2)\left(\sigma(e)k^2b_0^2\right)\delta_2(\sigma(e)k^2)r^6-4\left((\sigma(e)k^2)^2b_0^3\right)\delta_1(\sigma(e)k^2)b_0\delta_1(\sigma(e)k^2)r^6-4
\left((\sigma(e)k^2)^2b_0^3\right)\delta_2(\sigma(e)k^2)b_0\delta_2(\sigma(e)k^2)r^6+2\left((\sigma(e)k^2)^2b_0^3\right)\delta_1^2(\sigma(e)k^2)r^4+\cdots, 
\end{math}
\end{center}
a total of 82 terms of the form. The next step is to perform 
 the integration against $r\,\mathrm dr$ over $0$ to $\infty$.

An important integration formula we later prove requires that the leftmost powers of $b_0$ have
a factor of $\sigma(e)$ to their right. Leftmost $b_0$'s raised only to the power 1 already have this factor $\sigma(e)$
immediately to their right, and leftmost $b_0$'s raised to powers 2 or 3 have a power of $\sigma(e)k^2$
to the left which we can move to the right, again giving us  $\sigma(e)$ immediately to the right. As we explained earlier, 
It is also useful to use the trace property of $\tau$ to permute the multiplicative factors cyclically. 
Thus, instead of multiplying the above sum by $-2\pi b_0(\xi, -1, L^-)$. 

\subsubsection{Terms with all $b_0$ on the left}
As it turns out, it is difficult to evaluate improper integrals when nontrivial idempotents are involved. 
For instance, consider the integral $\int_0^{\infty}\!e/(1+ex)^2\,\mathrm dx$. One would expect that
$$\int_0^{\infty}\!\frac{e\,\mathrm dx}{(1+ex)^2}=\left[\frac{-1}{1+ex}\right]_0^{\infty}=1,$$
but that implies that
$$e=e\cdot 1=e\cdot\int_0^{\infty}\!\frac{e\,\mathrm dx}{(1+ex)^2}=\int_0^{\infty}\!\frac{e\,\mathrm dx}{(1+ex)^2}=1,$$
giving us a contradiction.  Since
$$e(1-ez+(ez)^2-\cdots)^2=e(1-z+z^2-\cdots)^2$$
for $|z|<1$, we have
$$\frac{e}{(1+ez)^2}=\frac{e}{(1+z)^2},$$
so we can properly evaluate the aforementioned improper integral as follows:
$$\int_0^{\infty}\!\frac{e\,\mathrm dx}{(1+ex)^2}=e\int_0^{\infty}\!\frac{\,\mathrm dx}{(1+x)^2}
=e\left[\frac{-1}{1+x}\right]_0^{\infty}=e.$$
To calculate the contribution of terms with all $b_0$ on the left to the trace, we need a formula for
$$\int_0^{\infty}\!(\sigma(e)k^2u+1)^{m+2}\backslash u^m\sigma(e)\,\mathrm du.$$
Since  $\sigma(e)$ and $k$ do not necessarily commute,  we cannot do the trick we did above. We need to reduce
to the case of terms with $b_0$ in the middle. The trace property allows to write:
\begin{eqnarray*}
&&\tau\left(\int_0^{\infty}\!(\sigma(e)k^2u+1)^{m+2}\backslash u^m\sigma(e)\rho\,\mathrm du\right) \\
&=&\tau\left(\int_0^{\infty}\!(\sigma(e)k^2u+1)^{m+1}\backslash(u^m\sigma(e)\rho)/(\sigma(e)k^2u+1)\,\mathrm du\right).
\end{eqnarray*}
Even though we get what looks like a more difficult integral, we can use techniques from combinatorics, Fourier analysis, and complex analysis to evaluate it, inspired by a proof given in \cite{MR2907006} for the case $e=1$.

\subsubsection{Terms with $b_0^2$ in the middle}
As in \cite{MR2907006}, we use integration by parts so that we can write terms with $b_0^2$ as terms with $b_0$ in the middle.
The main difference is that this time, $b_0=(\sigma(e)k^2u+1)^{-1}$, so $\partial_r(b_0)=-2\sigma(e)k^2rb_0^2$ and
instead of a term like
$$\int_0^{\infty}r^6(k^2b_0^3)\delta_1(k)k(k^2b_0^2)\delta_1(k)kr\,\mathrm dr,$$
we would have something like
$$\int_0^{\infty}r^6(\sigma(e)k^2b_0^3)\delta_1(k)k(\sigma(e)k^2b_0^2)\delta_1(k)kr\,\mathrm dr,$$
which we would replace by
$$\frac{\sigma(e)k^2}{2}\int_0^{\infty}\partial_r(r^6b_0^3)\delta_1(k)kb_0\delta_1(k)k\,\mathrm dr.$$
Again, we are able to reduce to the case of terms with $b_0$ in the middle.

\subsubsection{Terms with $b_0$ in the middle}
For integrals involving elements of $\SNT$ squeezed between powers
of $b_0=b_0(\xi, -1, L^-)$, we need to prove a rearrangement lemma. In the most basic case,
which suffices for our needs, the idempotent appears with $k^2$ in the denominator of
the $b_0$, and we need the following generalized version of the
rearragement lemma \cite[Lemma 6.2]{MR2907006}:
\begin{lem}\label{mvt}
For every element $\rho\in A_{\theta}^{\infty}$ and every non-negative integer $m$ we have
$$
\int_0^{\infty}\!(\sigma(ek^2)u+1)^{m+1}\backslash(u^m\sigma(e)\rho)/(\sigma(ek^2)u+1)\,\mathrm du
=
\sigma(e) D_m(k^{-(2m+2)}\sigma(e)\rho \sigma(e)),
$$
where  
$$ 
D_m=\mathcal L_m(\Delta), 
$$
\begin{align*}
\mathcal L_m(u)&=\int_0^{\infty}\!\frac{x^m}{(x+1)^{m+1}}\frac{1}{xu+1}\,\mathrm dx \\
&=(-1)^m(u-1)^{-(m+1)}\left(\log u-\sum_{j=1}^m(-1)^{j+1}\frac{(u-1)^j}{j}\right), 
\end{align*}
$\Delta(a)=k^{-2}ak^2$, $a \in \CNT$, is the modular automorphism, 
and $\sigma(a)=k^{-1}ak$ is its square root.
\end{lem}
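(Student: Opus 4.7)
The plan is to reduce the statement to its $e=1$ counterpart (Lemma 6.2 of \cite{MR2907006}) by carefully tracking how the idempotent $\sigma(e)$ propagates through the resolvent factors. Since $\sigma(k)=k^{-1}\,k\,k = k$, we have $\sigma(ek^2) = \sigma(e)\,k^2$; set $q := \sigma(e)$ and $x := k^2$ for brevity, so that the left-hand side becomes
\[
\int_0^{\infty}(qxu+1)^{-(m+1)}\,u^m\,q\rho\,(qxu+1)^{-1}\,du.
\]
The key algebraic tool is the \emph{idempotent absorption} identity
\[
(qxu+1)^{-n}\,q \;=\; q\,(xqu+1)^{-n}, \qquad n\geq 1,
\]
which is the noncommutative counterpart of the scalar simplification $e(1+ex)^{-n}=e(1+x)^{-n}$ motivated in the discussion preceding the lemma. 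I would prove it by expanding $(qxu+1)^{-n}$ as a formal Neumann series and using the telescoping identity $(qx)^{j}\,q = q\,(xq)^{j}$, verified by induction on $j$; induction on $n$ then upgrades this from the $n=1$ case.

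Using the identity above, the $q$ sitting in the middle of the integrand can be slid through the left factor $(qxu+1)^{-(m+1)}$ to the far left, producing the outer $\sigma(e)$ seen on the right-hand side of the claim. A companion absorption $(xqu+1)^{-n}(1-q) = 1-q$ (from $xq(1-q) = 0$), together with $(1-q)(qxu+1)^{-n} = 1-q$, shows that replacing $\rho$ by $q\rho q$ in the integrand modifies it only by terms whose $u$-integrand is supported on the $(1-q)$-subspace and hence is killed once the outer $q$ is applied from the left (this is also where the formal-vs.-convergent ambiguity foreshadowed by the toy example $\int_0^{\infty}\!e/(1+ex)^{2}\,dx$ of the preceding discussion is resolved). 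After this sandwiching, the integrand reads
\[
q\,(xqu+1)^{-(m+1)}\,u^m\,(q\rho q)\,(qxu+1)^{-1},
\]
which is effectively the idempotent-free situation of \cite{MR2907006}.

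The remaining integral is handled by the modular operator formalism. Viewing left multiplication by $(xqu+1)^{-(m+1)}$ and right multiplication by $(qxu+1)^{-1}$ as acting on $q\rho q$, and using $R_{k^2}=L_{k^2}\Delta$ with $\Delta=L_{k^{-2}}R_{k^2}$, the pair of resolvents combines into $(L_{k^{2}}u+1)^{-(m+1)}(L_{k^{2}}u\,\Delta+1)^{-1}$; after the substitution $t=k^{2}u$, the integral collapses to
\[
L_{k^{-(2m+2)}}\int_0^{\infty}\!\frac{t^{m}\,dt}{(t+1)^{m+1}(t\Delta+1)}\;=\;L_{k^{-(2m+2)}}\,\mathcal{L}_m(\Delta)
\]
applied to $q\rho q$, giving $k^{-(2m+2)}\,D_m(q\rho q)$. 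Prepending the outer $q$ yields the claimed formula $\sigma(e)\,D_m(k^{-(2m+2)}\sigma(e)\rho\,\sigma(e))$. The explicit closed form of $\mathcal{L}_m(u)$ follows from partial fractions on $x^{m}/\bigl[(x+1)^{m+1}(xu+1)\bigr]$ and elementary integration, producing the $(u-1)^{-(m+1)}\log u$ term together with the polynomial Taylor remainder. The main obstacle is the sandwiching step in the middle paragraph: justifying the replacement $\rho\mapsto q\rho q$ despite $qxu+1$ failing to be genuinely invertible on $\CNT$ requires combining both absorption identities with the annihilation by the outer $q$ of any $(1-q)$-residue produced by the formal manipulations.
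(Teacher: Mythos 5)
Your left-hand absorption identity $(\sigma(e)k^2u+1)^{-n}\sigma(e)=\sigma(e)(k^2\sigma(e)u+1)^{-n}$ is correct (it is the Neumann-series regrouping $f(AB)A=Af(BA)$, and it does produce the outer $\sigma(e)$), but the proof breaks at the two steps that follow. Write $q=\sigma(e)$. For the sandwiching $\rho\mapsto q\rho q$: the left insertion works, since $(k^2qu+1)^{-(m+1)}(1-q)=1-q$ and the outer $q$ kills it; but the right insertion does not. Using $(1-q)(qk^2u+1)^{-1}=1-q$, the right-hand correction is $u^m\,q\,(k^2qu+1)^{-(m+1)}\,q\rho(1-q)$, whose trailing factor $1-q$ sits at the far right where the outer left $q$ cannot reach it. This term is not zero, and its $u$-integral diverges logarithmically --- already visible in $M_2(\mathbb C)$ with $k=1$ and $e=E_{11}$, where the $(1,2)$ entry of the integrand decays only like $1/u$. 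So the annihilation mechanism you invoke at what you correctly identify as the crux cannot work as stated; your splitting decomposes the integral into a convergent piece and a divergent one, which signals that the identity must be produced by a regularizing device rather than by termwise algebra. Second, even granting the sandwiching, the resolvents still contain $e$ interlaced with $k$: the left factor is $(kek\,u+1)^{-(m+1)}$ and the right one is conjugate to $(ek^2u+1)^{-1}$, and since $e$ and $k$ do not commute one has $e(ek^2)^j=ek^2ek^2\cdots ek^2\neq ek^{2j}$, so $e(ek^2u+1)^{-n}\neq e(k^2u+1)^{-n}$ and the integral does not collapse to $L_{k^{-(2m+2)}}\mathcal L_m(\Delta)$ by the substitution you describe.

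This is precisely the obstruction the paper flags (``Since $\sigma(e)$ and $k$ do not necessarily commute, we cannot do the trick we did above''), and it is why the paper's proof takes an entirely different route: each resolvent is written as a Hadamard product of generating functions whose coefficients are powers of the operator $\Delta R_e$; the substitution $u=e^s$ and the Fourier transform of the hyperbolic secant convert the integral into one over a modular parameter $t$; the resulting angular integrals are evaluated by a Hankel-contour computation; and only at that point does idempotency enter, through the functional-calculus identity $R_e^{\,z}(1)=e$, which is what legitimately plants the factors $\sigma(e)$ on \emph{both} sides of $\rho$ and collapses the remaining $t$-integral to $\mathcal L_m(\Delta)$ applied to $k^{-(2m+2)}\sigma(e)\rho\,\sigma(e)$. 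In short: your outer $\sigma(e)$ and the closed form of $\mathcal L_m$ are fine, but the right-hand $\sigma(e)$ and the disappearance of $e$ from the resolvents require the analytic machinery, not the algebraic absorption argument.
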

\begin{proof} The proof is provided in  Appendix \ref{appproofsec}. 
\end{proof}
The following corollary will be useful in our calculations: 
\begin{cor}\label{mvc}
\begin{align*}
&\int_0^{\infty}\!(\Delta(ek^2e)u+1)^{m+1}\backslash(u^m\Delta(e)\rho)/(\Delta(ek^2e)u+1)\,\mathrm du \\
&=\Delta(e) D_m(k^{-(2m+2)}\Delta(e)\rho \Delta(e))\Delta(e)
\end{align*}
\end{cor}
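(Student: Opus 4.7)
My plan is to derive Corollary~\ref{mvc} from Lemma~\ref{mvt} by composing with the algebra automorphism $\sigma$ and then exploiting the idempotency $\Delta(e)^2=\Delta(e)$ through a sandwich identity to convert the one-sided denominator $\sigma(e)k^2$ of the lemma into the two-sided $\Delta(ek^2 e)=\Delta(e)k^2\Delta(e)$ required by the corollary.

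First I apply $\sigma$ to both sides of the lemma, equivalently substituting $e\mapsto\sigma(e)$ and relabeling $\sigma(\rho)\mapsto\rho$. Since $\sigma$ fixes $k$, satisfies $\sigma\circ\sigma=\Delta$, and commutes with $\Delta$ (and hence with $D_m=\mathcal{L}_m(\Delta)$), this yields the intermediate identity
\begin{equation*}
\int_0^\infty (1+\Delta(e)k^2 u)^{-(m+1)}\, u^m\, \Delta(e)\rho\, (1+\Delta(e)k^2 u)^{-1}\, du = \Delta(e)\, D_m\!\bigl(k^{-(2m+2)}\Delta(e)\rho\Delta(e)\bigr).
\end{equation*}

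Next I establish the sandwich identity $(1+\Delta(e)k^2 u)^{-n}\Delta(e)=(1+\Delta(e)k^2\Delta(e)u)^{-n}\Delta(e)$ for each $n\geq 1$, by expanding both sides as formal Neumann series in $u$ and comparing the $p$-th terms; they agree because $(\Delta(e)k^2)^p\Delta(e)=(\Delta(e)k^2\Delta(e))^p$, an immediate consequence of $\Delta(e)^2=\Delta(e)$. Right-multiplying the intermediate identity by $\Delta(e)$ converts the right-hand side into the corollary's right-hand side $\Delta(e) D_m(\cdots)\Delta(e)$; on the left-hand side, the sandwich identity (applied to the rightmost inverse with its new trailing $\Delta(e)$, and to the leftmost inverse after writing $(1+\Delta(e)k^2 u)^{-(m+1)}\Delta(e)\rho=[(1+\Delta(e)k^2 u)^{-(m+1)}\Delta(e)]\rho$) converts both inverses into the two-sided-sandwiched form $(1+\Delta(e)k^2\Delta(e)u)^{-\bullet}$, producing the corollary's LHS modulo a trailing $\Delta(e)$ that the formal Neumann-series convention of the paper (as in the rule $e/(1+ez)^2=e/(1+z)^2$) renders redundant, since each nontrivial term of $(1+\Delta(e)k^2\Delta(e)u)^{-1}$ already ends in $\Delta(e)$.

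The main obstacle is the careful bookkeeping of idempotent factors through the Neumann-series manipulations, in particular verifying the sandwich identity rigorously and justifying the absorption of the outer $\Delta(e)$ consistently with the paper's formal conventions for interpreting inverses involving idempotents. This parallels the technical analysis already needed in the proof of Lemma~\ref{mvt} in Appendix~\ref{appproofsec}; beyond this formal bookkeeping, the corollary requires no new analytic input and is essentially Lemma~\ref{mvt} repackaged via the automorphism $\sigma$ and the identity $\Delta(e)^2=\Delta(e)$.
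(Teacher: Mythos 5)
Your overall strategy is the same as the paper's: transport Lemma \ref{mvt} by the automorphism $\sigma$ (the appendix proves the lemma and corollary for general powers $\sigma^N$ and specializes), and then use $\Delta(e)^2=\Delta(e)$ together with Neumann-series identities to replace the one-sided denominators $\Delta(e)k^2u+1$ by the two-sided $\Delta(e)k^2\Delta(e)u+1$. Your intermediate identity and your sandwich identity $(1+\Delta(e)k^2u)^{-n}\Delta(e)=(1+\Delta(e)k^2\Delta(e)u)^{-n}\Delta(e)$ are both correct and are exactly the identities the paper uses.

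The gap is in the last step, where you discard the trailing $\Delta(e)$. Your justification --- that every nontrivial term of the Neumann series of $(1+\Delta(e)k^2\Delta(e)u)^{-1}$ already ends in $\Delta(e)$ --- ignores the constant term $1$, which does not. Writing $f=\Delta(e)$ and $g=\Delta(e)k^2\Delta(e)$, one has $(1+gu)^{-1}(1-f)=1-f$, so the corollary's left-hand side and your sandwiched expression differ by
\begin{equation*}
\int_0^{\infty}(1+gu)^{-(m+1)}\,u^m\,\Delta(e)\,\rho\,(1-\Delta(e))\,\mathrm du ,
\end{equation*}
which is not formally zero (the integrand equals $1-\Delta(e)$ times nothing helpful; it is a fixed nonzero element $\Delta(e)\rho(1-\Delta(e))$ multiplied on the left by something that decays only like $u^{-1}$, so the vanishing cannot be obtained by direct estimation either). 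This is precisely the point where the paper does real work: it applies Lemma \ref{mvt} twice, once to $\rho$ and once to $\rho e$, observes that the right-hand sides coincide because $\sigma(e)\rho\,\sigma(e)\,\sigma(e)=\sigma(e)\rho\,\sigma(e)$, and subtracts, using $(1-e)(ek^2u+1)^{-1}=1-e$ to conclude that $\int_0^{\infty}(ek^2u+1)^{m+1}\backslash\bigl(u^m e\rho(1-e)\bigr)\,\mathrm du=0$. That subtraction argument (transported by $\sigma$) is the one additional input your proof needs; with it inserted, the rest of your bookkeeping goes through and reproduces the paper's proof.
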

\begin{proof}
The proof is provided in  Appendix \ref{appproofsec}. 
\end{proof}

Applying Lemma \ref{mvt} and using the Leibniz rule to simplify,
we find that
\begin{center}
\begin{math}
\frac{1}{-2\pi}\,\tau \left ( a_2\left (L^- \right) \right )
=\tau \Big (\frac{1}{2}\sigma(e)D_1\left(k^{-4}\sigma(e)k\delta_1(k)\sigma(e)\right)\delta_1(\sigma(e)k^2)+\frac{1}{2}i\sigma(e)
D_1\left(k^{-4}\sigma(e)k\delta_1(k)\sigma(e)\right)\delta_2(\sigma(e)k^2)-\frac{1}{2}i\sigma(e)
D_1\left(k^{-4}\sigma(e)k\delta_2(k)\sigma(e)\right)\delta_1(\sigma(e)k^2)+\frac{1}{2}\sigma(e)
D_1\left(k^{-4}\sigma(e)k\delta_2(k)\sigma(e)\right)\delta_2(\sigma(e)k^2)+\frac{1}{2}\sigma(e)
D_1\left(k^{-4}\sigma(e)\delta_1(k)k\sigma(e)\right)\delta_1(\sigma(e)k^2)+\frac{1}{2}i\sigma(e)
D_1\left(k^{-4}\sigma(e)\delta_1(k)k\sigma(e)\right)\delta_2(\sigma(e)k^2)-\frac{1}{2}i\sigma(e)
D_1\left(k^{-4}\sigma(e)\delta_2(k)k\sigma(e)\right)\delta_1(\sigma(e)k^2)+\frac{1}{2}\sigma(e)
D_1\left(k^{-4}\sigma(e)\delta_2(k)k\sigma(e)\right)\delta_2(\sigma(e)k^2)+\frac{1}{2}\sigma(e)
D_1\left(k^{-4}\sigma(e)\delta_1(\sigma(e))k^2\sigma(e)\right)\delta_1(\sigma(e)k^2)+\frac{1}{2}i\sigma(e)
D_1\left(k^{-4}\sigma(e)\delta_1(\sigma(e))k^2\sigma(e)\right)\delta_2(\sigma(e)k^2)-\frac{1}{2}i\sigma(e)
D_1\left(k^{-4}\sigma(e)\delta_2(\sigma(e))k^2\sigma(e)\right)\delta_1(\sigma(e)k^2)+\frac{1}{2}\sigma(e)
D_1\left(k^{-4}\sigma(e)\delta_2(\sigma(e))k^2\sigma(e)\right)\delta_2(\sigma(e)k^2)-\sigma(e)k^2\sigma(e)
D_2\left(k^{-4}\sigma(e)k\delta_1(k)\sigma(e)\right)\delta_1(\sigma(e)k^2)-i\sigma(e)k^2\sigma(e)
D_2\left(k^{-4}\sigma(e)k\delta_1(k)\sigma(e)\right)\delta_2(\sigma(e)k^2)+i\sigma(e)k^2\sigma(e)
D_2\left(k^{-4}\sigma(e)k\delta_2(k)\sigma(e)\right)\delta_1(\sigma(e)k^2)-\sigma(e)k^2\sigma(e)
D_2\left(k^{-4}\sigma(e)k\delta_2(k)\sigma(e)\right)\delta_2(\sigma(e)k^2)-\sigma(e)k^2\sigma(e)
D_2\left(k^{-4}\sigma(e)\delta_1(k)k\sigma(e)\right)\delta_1(\sigma(e)k^2)-i\sigma(e)k^2\sigma(e)
D_2\left(k^{-4}\sigma(e)\delta_1(k)k\sigma(e)\right)\delta_2(\sigma(e)k^2)+i\sigma(e)k^2\sigma(e)
D_2\left(k^{-4}\sigma(e)\delta_2(k)k\sigma(e)\right)\delta_1(\sigma(e)k^2)-\sigma(e)k^2\sigma(e)
D_2\left(k^{-4}\sigma(e)\delta_2(k)k\sigma(e)\right)\delta_2(\sigma(e)k^2)-\sigma(e)k^2\sigma(e)
D_2\left(k^{-4}\sigma(e)\delta_1(\sigma(e))k^2\sigma(e)\right)\delta_1(\sigma(e)k^2)-i\sigma(e)k^2\sigma(e)
D_2\left(k^{-4}\sigma(e)\delta_1(\sigma(e))k^2\sigma(e)\right)\delta_2(\sigma(e)k^2)+i\sigma(e)k^2\sigma(e)
D_2\left(k^{-4}\sigma(e)\delta_2(\sigma(e))k^2\sigma(e)\right)\delta_1(\sigma(e)k^2)-\sigma(e)k^2\sigma(e)
D_2\left(k^{-4}\sigma(e)\delta_2(\sigma(e))k^2\sigma(e)\right)\delta_2(\sigma(e)k^2)-\sigma(e)
D_2\left(k^{-6}\sigma(e)k^2\delta_1(\sigma(e)k^2)\sigma(e)\right)\delta_1(\sigma(e)k^2)-\sigma(e)
D_2\left(k^{-6}\sigma(e)k^2\delta_2(\sigma(e)k^2)\sigma(e)\right)\delta_2(\sigma(e)k^2)+\sigma(e)
D_2\left(k^{-6}\sigma(e)k^2\sigma(e)k\delta_1(k)\sigma(e)\right)\delta_1(\sigma(e)k^2)+i\sigma(e)
D_2\left(k^{-6}\sigma(e)k^2\sigma(e)k\delta_1(k)\sigma(e)\right)\delta_2(\sigma(e)k^2)-i\sigma(e)
D_2\left(k^{-6}\sigma(e)k^2\sigma(e)k\delta_2(k)\sigma(e)\right)\delta_1(\sigma(e)k^2)+\sigma(e)
D_2\left(k^{-6}\sigma(e)k^2\sigma(e)k\delta_2(k)\sigma(e)\right)\delta_2(\sigma(e)k^2)+\sigma(e)
D_2\left(k^{-6}\sigma(e)k^2\sigma(e)\delta_1(k)k\sigma(e)\right)\delta_1(\sigma(e)k^2)+i\sigma(e)
D_2\left(k^{-6}\sigma(e)k^2\sigma(e)\delta_1(k)k\sigma(e)\right)\delta_2(\sigma(e)k^2)-i\sigma(e)
D_2\left(k^{-6}\sigma(e)k^2\sigma(e)\delta_2(k)k\sigma(e)\right)\delta_1(\sigma(e)k^2)+\sigma(e)
D_2\left(k^{-6}\sigma(e)k^2\sigma(e)\delta_2(k)k\sigma(e)\right)\delta_2(\sigma(e)k^2)+\sigma(e)
D_2\left(k^{-6}\sigma(e)k^2\sigma(e)\delta_1(\sigma(e))k^2\sigma(e)\right)\delta_1(\sigma(e)k^2)+i\sigma(e)
D_2\left(k^{-6}\sigma(e)k^2\sigma(e)\delta_1(\sigma(e))k^2\sigma(e)\right)\delta_2(\sigma(e)k^2)-i\sigma(e)
D_2\left(k^{-6}\sigma(e)k^2\sigma(e)\delta_2(\sigma(e))k^2\sigma(e)\right)\delta_1(\sigma(e)k^2)+\sigma(e)
D_2\left(k^{-6}\sigma(e)k^2\sigma(e)\delta_2(\sigma(e))k^2\sigma(e)\right)\delta_2(\sigma(e)k^2)+4\sigma(e)k^2\sigma(e)
D_3\left(k^{-6}\sigma(e)k^2\delta_1(\sigma(e)k^2)\sigma(e)\right)\delta_1(\sigma(e)k^2)+4\sigma(e)k^2\sigma(e)
D_3\left(k^{-6}\sigma(e)k^2\delta_2(\sigma(e)k^2)\sigma(e)\right)\delta_2(\sigma(e)k^2)-4\sigma(e)
D_3\left(k^{-8}\sigma(e)k^2\sigma(e)k^2\delta_1(\sigma(e)k^2)\sigma(e)\right)\delta_1(\sigma(e)k^2)-4
\sigma(e)D_3\left(k^{-8}\sigma(e)k^2\sigma(e)k^2\delta_2(\sigma(e)k^2)\sigma(e)\right)\delta
_2(\sigma(e)k^2)+\sigma(e)D_2\left(k^{-6}\sigma(e)k^2\delta_1(\sigma(e)k^2)\sigma(e)\right)\sigma(e)k\delta_1(k)-i\sigma(e)
D_2\left(k^{-6}\sigma(e)k^2\delta_1(\sigma(e)k^2)\sigma(e)\right)\sigma(e)k\delta_2(k)+\sigma(e)
D_2\left(k^{-6}\sigma(e)k^2\delta_1(\sigma(e)k^2)\sigma(e)\right)\sigma(e)\delta_1(k)k-i\sigma(e)
D_2\left(k^{-6}\sigma(e)k^2\delta_1(\sigma(e)k^2)\sigma(e)\right)\sigma(e)\delta_2(k)k+i\sigma(e)
D_2\left(k^{-6}\sigma(e)k^2\delta_2(\sigma(e)k^2)\sigma(e)\right)\sigma(e)k\delta_1(k)+\sigma(e)
D_2\left(k^{-6}\sigma(e)k^2\delta_2(\sigma(e)k^2)\sigma(e)\right)\sigma(e)k\delta_2(k)+i\sigma(e)
D_2\left(k^{-6}\sigma(e)k^2\delta_2(\sigma(e)k^2)\sigma(e)\right)\sigma(e)\delta_1(k)k+\sigma(e)
D_2\left(k^{-6}\sigma(e)k^2\delta_2(\sigma(e)k^2)\sigma(e)\right)\sigma(e)\delta_2(k)k+\sigma(e)
D_2\left(k^{-6}\sigma(e)k^2\delta_1(\sigma(e)k^2)\sigma(e)\right)\sigma(e)\delta_1(\sigma(e))k^2-i\sigma(e)
D_2\left(k^{-6}\sigma(e)k^2\delta_1(\sigma(e)k^2)\sigma(e)\right)\sigma(e)\delta_2(\sigma(e))k^2+i\sigma(e)
D_2\left(k^{-6}\sigma(e)k^2\delta_2(\sigma(e)k^2)\sigma(e)\right)\sigma(e)\delta_1(\sigma(e))k^2+\sigma(e)
D_2\left(k^{-6}\sigma(e)k^2\delta_2(\sigma(e)k^2)\sigma(e)\right)\sigma(e)\delta_2(\sigma(e))k^2-\sigma(e)D_1\left(k^{-4}\sigma(e)k^2\delta_1^2(\sigma(e)k^2)\sigma(e)\right)-\sigma(e)
D_1\left(k^{-4}\sigma(e)k^2\delta_2^2(\sigma(e)k^2)\sigma(e)\right)-\sigma(e)
D_1\left(k^{-4}\sigma(e)k^2\sigma(e)k\delta_1^2(k)\sigma(e)\right)-\sigma(e)
D_1\left(k^{-4}\sigma(e)k^2\sigma(e)k\delta_2^2(k)\sigma(e)\right)-2\sigma(e)
D_1\left(k^{-4}\sigma(e)k^2\sigma(e)\delta_1(k)\delta_1(k)\sigma(e)\right)-\sigma(e)
D_1\left(k^{-4}\sigma(e)k^2\sigma(e)\delta_1^2(k)k\sigma(e)\right)-2\sigma(e)
D_1\left(k^{-4}\sigma(e)k^2\sigma(e)\delta_2(k)\delta_2(k)\sigma(e)\right)-\sigma(e)
D_1\left(k^{-4}\sigma(e)k^2\sigma(e)\delta_2^2(k)k\sigma(e)\right)-2\sigma(e)
D_1\left(k^{-4}\sigma(e)k^2\sigma(e)\delta_1(\sigma(e))k\delta_1(k)\sigma(e)\right)-2\sigma(e)
D_1\left(k^{-4}\sigma(e)k^2\sigma(e)\delta_1(\sigma(e))\delta_1(k)k\sigma(e)\right)-\sigma(e)
D_1\left(k^{-4}\sigma(e)k^2\sigma(e)\delta_1^2(\sigma(e))k^2\sigma(e)\right)-2\sigma(e)
D_1\left(k^{-4}\sigma(e)k^2\sigma(e)\delta_2(\sigma(e))k\delta_2(k)\sigma(e)\right)-2\sigma(e)
D_1\left(k^{-4}\sigma(e)k^2\sigma(e)\delta_2(\sigma(e))\delta_2(k)k\sigma(e)\right)-\sigma(e)
D_1\left(k^{-4}\sigma(e)k^2\sigma(e)\delta_2^2(\sigma(e))k^2\sigma(e)\right)-\sigma(e)
D_1\left(k^{-4}\sigma(e)k^2\delta_1(\sigma(e))\sigma(e)k\delta_1(k)\sigma(e)\right)+i\sigma(e)
D_1\left(k^{-4}\sigma(e)k^2\delta_1(\sigma(e))\sigma(e)k\delta_2(k)\sigma(e)\right)-\sigma(e)
D_1\left(k^{-4}\sigma(e)k^2\delta_1(\sigma(e))\sigma(e)\delta_1(k)k\sigma(e)\right)+i\sigma(e)
D_1\left(k^{-4}\sigma(e)k^2\delta_1(\sigma(e))\sigma(e)\delta_2(k)k\sigma(e)\right)-\sigma(e)
D_1\left(k^{-4}\sigma(e)k^2\delta_1(\sigma(e))\delta_1(\sigma(e))k^2\sigma(e)\right)+i\sigma(e)
D_1\left(k^{-4}\sigma(e)k^2\delta_1(\sigma(e))\delta_2(\sigma(e))k^2\sigma(e)\right)-i\sigma(e)
D_1\left(k^{-4}\sigma(e)k^2\delta_2(\sigma(e))\sigma(e)k\delta_1(k)\sigma(e)\right)-\sigma(e)
D_1\left(k^{-4}\sigma(e)k^2\delta_2(\sigma(e))\sigma(e)k\delta_2(k)\sigma(e)\right)-i\sigma(e)
D_1\left(k^{-4}\sigma(e)k^2\delta_2(\sigma(e))\sigma(e)\delta_1(k)k\sigma(e)\right)-\sigma(e)
D_1\left(k^{-4}\sigma(e)k^2\delta_2(\sigma(e))\sigma(e)\delta_2(k)k\sigma(e)\right)-i\sigma(e)
D_1\left(k^{-4}\sigma(e)k^2\delta_2(\sigma(e))\delta_1(\sigma(e))k^2\sigma(e)\right)-\sigma(e)
D_1\left(k^{-4}\sigma(e)k^2\delta_2(\sigma(e))\delta_2(\sigma(e))k^2\sigma(e)\right)+2\sigma(e)
D_2\left(k^{-6}\sigma(e)k^2\sigma(e)k^2\delta_1^2(\sigma(e)k^2)\sigma(e)\right)+2\sigma(e)
D_2\left(k^{-6}\sigma(e)k^2\sigma(e)k^2\delta_2^2(\sigma(e)k^2)\sigma(e)\right) \Big ).
\end{math}
\end{center}

\subsection{Computation of $\tau \left  ( a_2 \left ( L^+ \right ) \right )$}
In a very similar manner, starting from the symbol of the operator $L^+$ written in Lemma 
\ref{Lplussymbollem}, we calculate a local formula for $\tau \left  ( a_2 \left ( L^+ \right ) \right )$. 
We find that $b_2(\xi, -1, L^+)$, up to right multiplication by $-b_0(\xi, -1, L^+)$, is the sum of 232 terms 
of the following form:
\begin{center}
\begin{math}
\left((b_0')\Delta(e)\right)k^{-1}\sigma(e)k^2\sigma(e)\delta_1^2(k)+\left((b_0')\Delta(e)\right)k^{-1}\sigma(e)k^2\sigma(e)\delta_2^2(k)+\left((b_0')
\Delta(e)\right)k^{-1}\sigma(e)k^2\delta_1(\sigma(e))\delta_1(k)-i\left((b_0')\Delta(e)\right)k^{-1}\sigma(e)k^2\delta_1(\sigma(e))\delta_2(k)+i
\left((b_0')\Delta(e)\right)k^{-1}\sigma(e)k^2\delta_2(\sigma(e))\delta_1(k)+\left((b_0')\Delta(e)\right)k^{-1}\sigma(e)k^2\delta_2(\sigma(e))\delta
_2(k)-4\xi_1^2\left((b_0')^2\Delta(e)k^2\right)k^{-1}\sigma(e)k^2\sigma(e)\delta_1^2(k)
-\cdots.
\end{math}
\end{center}
After passing to the polar coordinates and performing the angular integration, up to write multiplication by  
$-2\pi b_0(\xi, -1, L^+)$, these terms add up to the sum of 82 terms:
\begin{center}
\begin{math}
-2\left(\Delta(e)k^2(b_0')^2\Delta(e)\right)\delta_1(\Delta(e)k^2\Delta(e))\left(\Delta(e)k^2(b_0')^2\Delta(e)\right)\delta_1(\Delta(e)k^2\Delta(e))r^6-2\left(\Delta(e)k^2
(b_0')^2\Delta(e)\right)\delta_2(\Delta(e)k^2\Delta(e))\left(\Delta(e)k^2(b_0')^2\Delta(e)\right)\delta_2(\Delta(e)k^2\Delta(e))r^6-4\left((\Delta(e)k^2)^2(b_0')^3
\Delta(e)\right)\delta_1(\Delta(e)k^2\Delta(e))\left((b_0')\Delta(e)\right)\delta_1(\Delta(e)k^2\Delta(e))r^6-4\left((\Delta(e)k^2)^2(b_0')^3\Delta(e)\right)\delta
_2(\Delta(e)k^2\Delta(e))\left((b_0')\Delta(e)\right)\delta_2(\Delta(e)k^2\Delta(e))r^6+\cdots
\end{math}
\end{center}
Applying Corollary \ref{mvc}, which allows replacing $b_0=b_0(\xi, -1, L^+)$ with $b_0 \Delta(e)$, 
we find that: 
\begin{center}
\begin{math}
\frac{1}{-2\pi}\,\tau \left ( a_2\left (L^+ \right) \right )=\tau \Big (\Delta(e)D_1\left(k^{-4}\Delta(e)k\sigma(e)\delta_1(k)\Delta(e)\right)\Delta(e)\delta_1(\Delta(e)k^2\Delta(e))+\Delta(e)
D_1\left(k^{-4}\Delta(e)k\sigma(e)\delta_2(k)\Delta(e)\right)\Delta(e)\delta_2(\Delta(e)k^2\Delta(e))+\frac{1}{2}\Delta(e)
D_1\left(k^{-4}\Delta(e)k\delta_1(\sigma(e))k\Delta(e)\right)\Delta(e)\delta_1(\Delta(e)k^2\Delta(e))-\frac{1}{2}i\Delta(e)
D_1\left(k^{-4}\Delta(e)k\delta_1(\sigma(e))k\Delta(e)\right)\Delta(e)\delta_2(\Delta(e)k^2\Delta(e))+\frac{1}{2}i\Delta(e)
D_1\left(k^{-4}\Delta(e)k\delta_2(\sigma(e))k\Delta(e)\right)\Delta(e)\delta_1(\Delta(e)k^2\Delta(e))+\frac{1}{2}\Delta(e)
D_1\left(k^{-4}\Delta(e)k\delta_2(\sigma(e))k\Delta(e)\right)\Delta(e)\delta_2(\Delta(e)k^2\Delta(e))-2\Delta(e)k^2\Delta(e)
D_2\left(k^{-4}\Delta(e)k\sigma(e)\delta_1(k)\Delta(e)\right)\Delta(e)\delta_1(\Delta(e)k^2\Delta(e))-2\Delta(e)k^2\Delta(e)
D_2\left(k^{-4}\Delta(e)k\sigma(e)\delta_2(k)\Delta(e)\right)\Delta(e)\delta_2(\Delta(e)k^2\Delta(e))-\Delta(e)k^2\Delta(e)
D_2\left(k^{-4}\Delta(e)k\delta_1(\sigma(e))k\Delta(e)\right)\Delta(e)\delta_1(\Delta(e)k^2\Delta(e))+i\Delta(e)k^2\Delta(e)
D_2\left(k^{-4}\Delta(e)k\delta_1(\sigma(e))k\Delta(e)\right)\Delta(e)\delta_2(\Delta(e)k^2\Delta(e))-i\Delta(e)k^2\Delta(e)
D_2\left(k^{-4}\Delta(e)k\delta_2(\sigma(e))k\Delta(e)\right)\Delta(e)\delta_1(\Delta(e)k^2\Delta(e))-\Delta(e)k^2\Delta(e)
D_2\left(k^{-4}\Delta(e)k\delta_2(\sigma(e))k\Delta(e)\right)\Delta(e)\delta_2(\Delta(e)k^2\Delta(e))-\Delta(e)
D_2\left(k^{-6}\Delta(e)k^2\Delta(e)\delta_1(\Delta(e)k^2\Delta(e))\Delta(e)\right)\Delta(e)\delta
_1(\Delta(e)k^2\Delta(e))-\Delta(e)D_2\left(k^{-6}\Delta(e)k^2\Delta(e)\delta
_2(\Delta(e)k^2\Delta(e))\Delta(e)\right)\Delta(e)\delta_2(\Delta(e)k^2\Delta(e))+2\Delta(e)
D_2\left(k^{-6}\Delta(e)k^2\Delta(e)k\sigma(e)\delta_1(k)\Delta(e)\right)\Delta(e)\delta_1(\Delta(e)k^2\Delta(e))+2
\Delta(e)D_2\left(k^{-6}\Delta(e)k^2\Delta(e)k\sigma(e)\delta_2(k)\Delta(e)\right)\Delta(e)\delta
_2(\Delta(e)k^2\Delta(e))+\Delta(e)D_2\left(k^{-6}\Delta(e)k^2\Delta(e)k\delta_1(\sigma(e))k\Delta(e)\right)\Delta(e)\delta
_1(\Delta(e)k^2\Delta(e))-i\Delta(e)D_2\left(k^{-6}\Delta(e)k^2\Delta(e)k\delta
_1(\sigma(e))k\Delta(e)\right)\Delta(e)\delta_2(\Delta(e)k^2\Delta(e))+i\Delta(e)
D_2\left(k^{-6}\Delta(e)k^2\Delta(e)k\delta_2(\sigma(e))k\Delta(e)\right)\Delta(e)\delta_1(\Delta(e)k^2\Delta(e))+\Delta(e)
D_2\left(k^{-6}\Delta(e)k^2\Delta(e)k\delta_2(\sigma(e))k\Delta(e)\right)\Delta(e)\delta_2(\Delta(e)k^2\Delta(e))+4
\Delta(e)k^2\Delta(e)D_3\left(k^{-6}\Delta(e)k^2\Delta(e)\delta_1(\Delta(e)k^2\Delta(e))\Delta(e)\right)\Delta(e)\delta
_1(\Delta(e)k^2\Delta(e))+4\Delta(e)k^2\Delta(e)D_3\left(k^{-6}\Delta(e)k^2\Delta(e)\delta
_2(\Delta(e)k^2\Delta(e))\Delta(e)\right)\Delta(e)\delta_2(\Delta(e)k^2\Delta(e))-4\Delta(e)
D_3\left(k^{-8}\Delta(e)k^2\Delta(e)k^2\Delta(e)\delta
_1(\Delta(e)k^2\Delta(e))\Delta(e)\right)\Delta(e)\delta_1(\Delta(e)k^2\Delta(e))-4\Delta(e)
D_3\left(k^{-8}\Delta(e)k^2\Delta(e)k^2\Delta(e)\delta
_2(\Delta(e)k^2\Delta(e))\Delta(e)\right)\Delta(e)\delta_2(\Delta(e)k^2\Delta(e))-2\Delta(e)D_1\left(k^{-4}\Delta(e)k\sigma(e)\delta
_1(k)\Delta(e)\right)\Delta(e)k\sigma(e)\delta_1(k)-\Delta(e)D_1\left(k^{-4}\Delta(e)k\sigma(e)\delta
_1(k)\Delta(e)\right)\Delta(e)k\delta_1(\sigma(e))k-i\Delta(e)D_1\left(k^{-4}\Delta(e)k\sigma(e)\delta
_1(k)\Delta(e)\right)\Delta(e)k\delta_2(\sigma(e))k-2\Delta(e)D_1\left(k^{-4}\Delta(e)k\sigma(e)\delta
_2(k)\Delta(e)\right)\Delta(e)k\sigma(e)\delta_2(k)+i\Delta(e)D_1\left(k^{-4}\Delta(e)k\sigma(e)\delta
_2(k)\Delta(e)\right)\Delta(e)k\delta_1(\sigma(e))k-\Delta(e)D_1\left(k^{-4}\Delta(e)k\sigma(e)\delta
_2(k)\Delta(e)\right)\Delta(e)k\delta_2(\sigma(e))k-\Delta(e)D_1\left(k^{-4}\Delta(e)k\delta
_1(\sigma(e))k\Delta(e)\right)\Delta(e)k\sigma(e)\delta_1(k)+i\Delta(e)D_1\left(k^{-4}\Delta(e)k\delta
_1(\sigma(e))k\Delta(e)\right)\Delta(e)k\sigma(e)\delta_2(k)-i\Delta(e)D_1\left(k^{-4}\Delta(e)k\delta
_2(\sigma(e))k\Delta(e)\right)\Delta(e)k\sigma(e)\delta_1(k)-\Delta(e)D_1\left(k^{-4}\Delta(e)k\delta
_2(\sigma(e))k\Delta(e)\right)\Delta(e)k\sigma(e)\delta_2(k)+2\Delta(e)D_2\left(k^{-6}\Delta(e)k^2\Delta(e)\delta
_1(\Delta(e)k^2\Delta(e))\Delta(e)\right)\Delta(e)k\sigma(e)\delta_1(k)+\Delta(e)D_2\left(k^{-6}\Delta(e)k^2\Delta(e)\delta
_1(\Delta(e)k^2\Delta(e))\Delta(e)\right)\Delta(e)k\delta_1(\sigma(e))k+i\Delta(e)
D_2\left(k^{-6}\Delta(e)k^2\Delta(e)\delta_1(\Delta(e)k^2\Delta(e))\Delta(e)\right)\Delta(e)k\delta_2(\sigma(e))k+2
\Delta(e)D_2\left(k^{-6}\Delta(e)k^2\Delta(e)\delta_2(\Delta(e)k^2\Delta(e))\Delta(e)\right)\Delta(e)k\sigma(e)\delta
_2(k)-i\Delta(e)D_2\left(k^{-6}\Delta(e)k^2\Delta(e)\delta_2(\Delta(e)k^2\Delta(e))\Delta(e)\right)\Delta(e)k\delta
_1(\sigma(e))k+\Delta(e)D_2\left(k^{-6}\Delta(e)k^2\Delta(e)\delta_2(\Delta(e)k^2\Delta(e))\Delta(e)\right)\Delta(e)k\delta
_2(\sigma(e))k+\Delta(e)D_0\left(\frac{1}{k}\frac{1}{k}\Delta(e)k\sigma(e)\delta_1^2(k)\Delta(e)\right)+\Delta(e)D_0\left(\frac{1}{k}\frac{1}{k}\Delta(e)k\sigma(e)\delta
_2^2(k)\Delta(e)\right)+\Delta(e)D_0\left(\frac{1}{k}\frac{1}{k}\Delta(e)k\delta_1(\sigma(e))\delta_1(k)\Delta(e)\right)-i\Delta(e)
D_0\left(\frac{1}{k}\frac{1}{k}\Delta(e)k\delta_1(\sigma(e))\delta_2(k)\Delta(e)\right)+i\Delta(e)D_0\left(\frac{1}{k}\frac{1}{k}\Delta(e)k\delta_2(\sigma(e))\delta
_1(k)\Delta(e)\right)+\Delta(e)D_0\left(\frac{1}{k}\frac{1}{k}\Delta(e)k\delta_2(\sigma(e))\delta_2(k)\Delta(e)\right)-\Delta(e)
D_1\left(k^{-4}\Delta(e)k^2\Delta(e)\delta_1^2(\Delta(e)k^2\Delta(e))\Delta(e)\right)-\Delta(e)
D_1\left(k^{-4}\Delta(e)k^2\Delta(e)\delta_2^2(\Delta(e)k^2\Delta(e))\Delta(e)\right)-2\Delta(e)
D_1\left(k^{-4}\Delta(e)k^2\Delta(e)k\sigma(e)\delta_1^2(k)\Delta(e)\right)-2\Delta(e)
D_1\left(k^{-4}\Delta(e)k^2\Delta(e)k\sigma(e)\delta_2^2(k)\Delta(e)\right)-3\Delta(e)
D_1\left(k^{-4}\Delta(e)k^2\Delta(e)k\delta_1(\sigma(e))\delta_1(k)\Delta(e)\right)+i\Delta(e)
D_1\left(k^{-4}\Delta(e)k^2\Delta(e)k\delta_1(\sigma(e))\delta_2(k)\Delta(e)\right)-\Delta(e)
D_1\left(k^{-4}\Delta(e)k^2\Delta(e)k\delta_1^2(\sigma(e))k\Delta(e)\right)-i\Delta(e)
D_1\left(k^{-4}\Delta(e)k^2\Delta(e)k\delta_2(\sigma(e))\delta_1(k)\Delta(e)\right)-3\Delta(e)
D_1\left(k^{-4}\Delta(e)k^2\Delta(e)k\delta_2(\sigma(e))\delta_2(k)\Delta(e)\right)-\Delta(e)
D_1\left(k^{-4}\Delta(e)k^2\Delta(e)k\delta_2^2(\sigma(e))k\Delta(e)\right)-2\Delta(e)
D_1\left(k^{-4}\Delta(e)k^2\Delta(e)\delta_1(k)\sigma(e)\delta_1(k)\Delta(e)\right)-\Delta(e)
D_1\left(k^{-4}\Delta(e)k^2\Delta(e)\delta_1(k)\delta_1(\sigma(e))k\Delta(e)\right)-i\Delta(e)
D_1\left(k^{-4}\Delta(e)k^2\Delta(e)\delta_1(k)\delta_2(\sigma(e))k\Delta(e)\right)-2\Delta(e)
D_1\left(k^{-4}\Delta(e)k^2\Delta(e)\delta_2(k)\sigma(e)\delta_2(k)\Delta(e)\right)+i\Delta(e)
D_1\left(k^{-4}\Delta(e)k^2\Delta(e)\delta_2(k)\delta_1(\sigma(e))k\Delta(e)\right)-\Delta(e)
D_1\left(k^{-4}\Delta(e)k^2\Delta(e)\delta_2(k)\delta_2(\sigma(e))k\Delta(e)\right)-2\Delta(e)
D_1\left(k^{-4}\Delta(e)k^2\Delta(e)\frac{1}{k}\sigma(e)\delta_1(k)k\sigma(e)\delta_1(k)\Delta(e)\right)-\Delta(e)
D_1\left(k^{-4}\Delta(e)k^2\Delta(e)\frac{1}{k}\sigma(e)\delta_1(k)k\delta_1(\sigma(e))k\Delta(e)\right)-i\Delta(e)
D_1\left(k^{-4}\Delta(e)k^2\Delta(e)\frac{1}{k}\sigma(e)\delta_1(k)k\delta_2(\sigma(e))k\Delta(e)\right)-2\Delta(e)
D_1\left(k^{-4}\Delta(e)k^2\Delta(e)\frac{1}{k}\sigma(e)\delta_2(k)k\sigma(e)\delta_2(k)\Delta(e)\right)+i\Delta(e)
D_1\left(k^{-4}\Delta(e)k^2\Delta(e)\frac{1}{k}\sigma(e)\delta_2(k)k\delta_1(\sigma(e))k\Delta(e)\right)-\Delta(e)
D_1\left(k^{-4}\Delta(e)k^2\Delta(e)\frac{1}{k}\sigma(e)\delta_2(k)k\delta_2(\sigma(e))k\Delta(e)\right)-2\Delta(e)
D_1\left(k^{-4}\Delta(e)k^2\Delta(e)\frac{1}{k}\delta_1(\sigma(e))k^2\sigma(e)\delta_1(k)\Delta(e)\right)-\Delta(e)
D_1\left(k^{-4}\Delta(e)k^2\Delta(e)\frac{1}{k}\delta_1(\sigma(e))k^2\delta_1(\sigma(e))k\Delta(e)\right)-i\Delta(e)
D_1\left(k^{-4}\Delta(e)k^2\Delta(e)\frac{1}{k}\delta_1(\sigma(e))k^2\delta_2(\sigma(e))k\Delta(e)\right)-2\Delta(e)
D_1\left(k^{-4}\Delta(e)k^2\Delta(e)\frac{1}{k}\delta_2(\sigma(e))k^2\sigma(e)\delta_2(k)\Delta(e)\right)+i\Delta(e)
D_1\left(k^{-4}\Delta(e)k^2\Delta(e)\frac{1}{k}\delta_2(\sigma(e))k^2\delta_1(\sigma(e))k\Delta(e)\right)-\Delta(e)
D_1\left(k^{-4}\Delta(e)k^2\Delta(e)\frac{1}{k}\delta_2(\sigma(e))k^2\delta_2(\sigma(e))k\Delta(e)\right)+2\Delta(e)
D_1\left(k^{-4}\Delta(e)k^2\Delta(e)\frac{1}{k}\delta_1(k)\frac{1}{k}\sigma(e)k^2\sigma(e)\delta_1(k)\Delta(e)\right)+\Delta(e)
D_1\left(k^{-4}\Delta(e)k^2\Delta(e)\frac{1}{k}\delta_1(k)\frac{1}{k}\sigma(e)k^2\delta_1(\sigma(e))k\Delta(e)\right)+i\Delta(e)
D_1\left(k^{-4}\Delta(e)k^2\Delta(e)\frac{1}{k}\delta_1(k)\frac{1}{k}\sigma(e)k^2\delta_2(\sigma(e))k\Delta(e)\right)+2\Delta(e)
D_1\left(k^{-4}\Delta(e)k^2\Delta(e)\frac{1}{k}\delta_2(k)\frac{1}{k}\sigma(e)k^2\sigma(e)\delta_2(k)\Delta(e)\right)-i\Delta(e)
D_1\left(k^{-4}\Delta(e)k^2\Delta(e)\frac{1}{k}\delta_2(k)\frac{1}{k}\sigma(e)k^2\delta_1(\sigma(e))k\Delta(e)\right)+\Delta(e)
D_1\left(k^{-4}\Delta(e)k^2\Delta(e)\frac{1}{k}\delta_2(k)\frac{1}{k}\sigma(e)k^2\delta_2(\sigma(e))k\Delta(e)\right)+2\Delta(e)
D_2\left(k^{-6}\Delta(e)k^2\Delta(e)k^2\Delta(e)\delta_1^2(\Delta(e)k^2\Delta(e))\Delta(e)\right)+2\Delta(e)
D_2\left(k^{-6}\Delta(e)k^2\Delta(e)k^2\Delta(e)\delta_2^2(\Delta(e)k^2\Delta(e))\Delta(e)\right) \Big ).
\end{math}
\end{center}

\subsection{Reduction to the flat metric and the Connes-Chern number}

Having calculated explicit local formulas  for $\tau \left ( a_2(L^\pm) \right )$, based on \eqref{indDplus}, we have 
found a local formula for the index of the twisted Dirac operator $D_{e, \sigma}^+$ on $\NT$ given by \eqref{twistedDiracopnct}, 
where the Dirac operator is associated with a conformally flat metric and the twisting is carried out by 
an idempotent playing the role of a general vector bundle. In forthcoming work, we will present a 
simplification of the local formula and will elaborate on the relation between properties of the functions 
of the modular automorphism in the simplified form and stability of the index under perturbations that 
leave the Dirac operator in the same connected component of Fredholm operators. We end this article by  
considering the case of the canonical flat metric on $\NT$, which corresponds to the trivial conformal factor 
$k=1$ whose corresponding modular automorphism $\Delta$ is the identity. Therefore, 
our formula for the index, in this case, reduces to a much simpler form as one can replace 
the functions of the modular automorphism with the values of the functions at 1.  This yields, for $k=1$: 
$$
\textrm{Ind }D_{e,\sigma}^+=2\pi i \, \tau \left (e\delta_1(e)\delta_2(e)-e\delta_2(e)\delta_1(e) \right).
$$
Note that $$c_1(e):=2\pi i \, \tau(e\delta_1(e)\delta_2(e)-e\delta_2(e)\delta_1(e))$$ is the Connes-Chern number
of our projection $e$, and that, 
as proven in \cite{proj}, one can construct $e$ as a self-dual or anti-self dual projection
with  $c_1(e)=n$ for any integer $n$.

\appendix

\section{Proof of Lemma \ref{mvt} and Corollary \ref{mvc}}
\label{appproofsec}

The {\em Hadamard product} of two power series 
$f(z)=\sum_{n=0}^{\infty}f_nz^n$ and $g(z)=\sum_{n=0}^{\infty}g_nz^n$ 
is defined by 
$$
(f\odot g)(z) =\sum_{n=0}^{\infty}f_ng_nz^n.
$$

\begin{prop} \cite[p.~424]{fs}
Suppose $f$ and $g$ are analytic functions on a domain $D\subset\mathbb C$,
and let $\gamma\subset D\cap(zD^{-1})$ be a closed contour. Then
the Hadamard product of $f$ and $g$ obeys the following identity:
$$(f\odot g)(z)=\frac{1}{2\pi i}\int_{\gamma}\!f(w) \, g(z/w) \, dw/w.$$
\end{prop}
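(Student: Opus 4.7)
The plan is to prove the identity by unpacking the left side via the Cauchy coefficient formula, substituting into the Hadamard product, and then recognizing the resulting power series as the Taylor expansion of $g$ evaluated at $z/w$.

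First, I would note that the hypothesis $\gamma \subset D \cap (zD^{-1})$ is precisely what is needed to make sense of both factors in the integrand: if $w \in \gamma$ then $w \in D$, so $f(w)$ is defined and the Cauchy integral formula applies to $f$ on $\gamma$; while $w \in zD^{-1}$ means $z/w \in D$, so $g(z/w)$ is defined. I would begin by fixing $z$ so that $\gamma$ satisfies these conditions, and by choosing (if necessary, by shrinking $\gamma$) a closed contour around the origin so that Cauchy's integral formula gives
\[
f_n = \frac{1}{2\pi i} \int_\gamma \frac{f(w)}{w^{n+1}}\, dw, \qquad n \geq 0.
\]

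Next I would substitute this into the definition of the Hadamard product:
\[
(f \odot g)(z) = \sum_{n=0}^\infty f_n g_n z^n = \sum_{n=0}^\infty g_n z^n \cdot \frac{1}{2\pi i}\int_\gamma \frac{f(w)}{w^{n+1}}\, dw.
\]
The key step is to interchange summation and integration, yielding
\[
(f \odot g)(z) = \frac{1}{2\pi i}\int_\gamma \frac{f(w)}{w}\sum_{n=0}^\infty g_n \left(\frac{z}{w}\right)^n\, dw = \frac{1}{2\pi i}\int_\gamma f(w)\, g(z/w)\, \frac{dw}{w},
\]
since by the containment $\gamma \subset zD^{-1}$ the points $z/w$ lie inside the disk of convergence of $g$, so the series $\sum g_n (z/w)^n$ converges to $g(z/w)$ absolutely.

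The main obstacle, and essentially the only nontrivial point, will be justifying the interchange of sum and integral. I would handle this by observing that $\gamma$ is a compact set avoiding $0$, so $|w|$ is bounded below by some $r > 0$ on $\gamma$; by choosing $\gamma$ so that $|z/w|$ stays strictly less than the radius of convergence of $g$ uniformly on $\gamma$, one obtains a geometric majorant that makes the series $\sum g_n (z/w)^n$ converge uniformly in $w \in \gamma$. Combined with the continuity (hence boundedness) of $f(w)/w$ on $\gamma$, this gives uniform convergence of the integrand and legitimizes the interchange by standard dominated convergence (or by termwise integration of a uniformly convergent series on a compact contour).
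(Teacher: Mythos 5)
Your proof is correct: the paper itself offers no proof of this proposition (it is quoted verbatim from Flajolet--Sedgewick, p.~424), and your argument --- Cauchy's coefficient formula for $f_n$, substitution into $\sum f_n g_n z^n$, and termwise integration justified by uniform convergence of $\sum g_n (z/w)^n$ on the compact contour --- is precisely the standard derivation given in that reference. Your observation that the hypothesis $\gamma \subset D \cap (zD^{-1})$ is exactly what makes both $f(w)$ and $g(z/w)$ well defined on $\gamma$ (with the implicit requirement that $\gamma$ wind once around the origin so the coefficient formula applies) correctly identifies the only points needing care.
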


\begin{lem} 
Let $N$ be an integer.
For every element $\rho \in \SNT$ and every 
non-negative integer $m$ we have
\begin{align*}
\int_0^{\infty}\!(\sigma^N(ek^2)u+1)^{m+1}\backslash(u^m\sigma^N(e)\rho)/(\sigma^N(ek^2)u+1)\,\mathrm du & \\
=\sigma^N(e)  D_m(k^{-(2m+2)}\sigma^N(e)\rho \sigma^N(e)), &
\end{align*}
where the modified logarithm $ D_m=\mathcal L_m(\Delta)$ of the modular automorphism 
$\Delta(a)=k^{-2}ak^2$, $a \in \CNT$, with the square root $\sigma(a)=k^{-1}ak$, is defined via the function
\begin{align*}
\mathcal L_m(u)&=\int_0^{\infty}\!\frac{x^m}{(x+1)^{m+1}}\frac{1}{xu+1}\,\mathrm dx \\
&=(-1)^m(u-1)^{-(m+1)}\left(\log u-\sum_{j=1}^m(-1)^{j+1}\frac{(u-1)^j}{j}\right). 
\end{align*}
\end{lem}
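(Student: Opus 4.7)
Set $f := \sigma^N(e)$, so $f^2=f$, and note that $\sigma^N(ek^2)=fk^2$ since $\sigma(k^2) = k^{-1}k^2 k = k^2$. The plan is to generalize the proof of the classical rearrangement lemma (the case $e=1$, given in \cite{MR2907006}) by carefully tracking the idempotent through the manipulations and invoking the Hadamard product proposition stated above.

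The first step is algebraic. The computation $(1+fk^2u)f = f+ufk^2f = f(1+k^2fu)$ together with $f^2=f$ gives $(fk^2u+1)^{-1}f = f(k^2fu+1)^{-1}$, which iterated $m+1$ times yields $(fk^2u+1)^{-(m+1)}f = f(k^2fu+1)^{-(m+1)}$. Substituting into the left-hand side of the lemma, the integral becomes
\begin{equation*}
f\int_0^\infty u^m(k^2fu+1)^{-(m+1)}\rho(fk^2u+1)^{-1}\,du.
\end{equation*}
In parallel, the identity $(1-f)(fk^2u+1)^{-1}=1-f$ (because $(1-f)f=0$) ensures that the $(1-f)$-component of $\rho$ on the right passes through the resolvent unchanged, so that the surviving contribution — after multiplication by the outer $f$ on the left and the cancellations produced by the $u$-integration — lies in $fAf$ with the idempotent present on both sides of $\rho$.

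Next, expand $\phi(z)=(1+z)^{-(m+1)}$ and $\psi(z)=(1+z)^{-1}$ as formal power series, and use the modular relation $ak^2 = k^2\Delta(a)$ to commute factors of $k^2$ past $\rho$ and $f$. An induction yields
\begin{equation*}
\rho(fk^2)^n = k^{2n}\Delta^n(\rho)\Delta^n(f)\Delta^{n-1}(f)\cdots\Delta(f),
\end{equation*}
with an analogous formula for $(k^2f)^p\rho$; the integrand then becomes a formal double power series in $u$ with coefficients that are products of $k$-powers, iterated $\Delta$-twists of $f$, and $\Delta^j(\rho)$. Applying the Hadamard product formula
\begin{equation*}
(\phi\odot\psi)(z) = \frac{1}{2\pi i}\int_\gamma \phi(w)\psi(z/w)\,\frac{dw}{w}
\end{equation*}
converts this double sum into a single contour integral over an auxiliary variable $w$, effecting a separation of variables. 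After interchanging the $w$- and $u$-integrations and performing the elementary $u$-integral
\begin{equation*}
\int_0^\infty u^m(1+u\alpha)^{-(m+1)}(1+u\beta)^{-1}\,du = \alpha^{-(m+1)}\mathcal L_m(\beta/\alpha),
\end{equation*}
the remaining $w$-contour is recognized, after a change of variable, as the integral representation of $\mathcal L_m$ evaluated at the operator $\Delta$ and applied to $k^{-(2m+2)}f\rho f$. This produces the desired $f\cdot D_m(k^{-(2m+2)}f\rho f)$.

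The main technical obstacle is that the left- and right-multiplication operators $L_{fk^2}$ and $R_{fk^2}$ are not invertible on $\mathrm{End}(A)$ (because $f$ is only an idempotent), so the scalar substitution $v = uL_{fk^2}$ used when $e=1$ cannot be applied verbatim. The first step above is precisely what overcomes this: once the left $f$ has been pulled outside the integral, all subsequent manipulations are effectively confined to the corner where the relevant operators act invertibly, allowing the classical Hadamard-product argument to go through with only cosmetic modifications — the nontrivial effect of the idempotent being absorbed into the symmetric appearance of $f$ on both sides of $\rho$ inside $D_m$.
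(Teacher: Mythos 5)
Your overall strategy is the same as the paper's: expand the two resolvents as series twisted by the modular automorphism, separate the variables with the Hadamard-product contour formula, and evaluate the elementary $u$-integral to produce $\mathcal L_m$. The reduction of general $N$ to $N=0$ and the algebraic identity $(fk^2u+1)^{-1}f=f(k^2fu+1)^{-1}$ are fine. The genuine gap is in the decisive step, where you claim that once the single $f$ has been pulled to the left "the classical Hadamard-product argument go[es] through with only cosmetic modifications." It does not: both resolvents still contain $f$, and their expansions produce the iterated twisted products $\Delta^n(f)\Delta^{n-1}(f)\cdots\Delta(f)$ (you even write this formula down), which are \emph{not} powers of a single positive invertible element and are not confined to the corner $fAf$ --- the elements $k^2f$ and $fk^2$ do not lie in that corner, and the multiplication operators $L_{k^2f}$, $R_{fk^2}$ remain non-invertible after your first step. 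The entire content of the proof is to explain why these iterated products nevertheless collapse to a single $f$ on each side of $\rho$ inside $D_m$. The paper does this by encoding the twisted-product series as $\bigl(1-u\Delta R_e\bigr)^{-1}(1)$, evaluating the Hadamard-product contour integrals over a Hankel contour to obtain \emph{complex powers} of the operator $\Delta R_e$, and only then invoking the idempotency of $R_e$ so that those powers, applied to $1$, reduce to $R_e(1)=e$. Your sentence "the remaining $w$-contour is recognized \dots as the integral representation of $\mathcal L_m$ evaluated at $\Delta$ and applied to $k^{-(2m+2)}f\rho f$" asserts exactly the conclusion of this analysis without supplying the mechanism, so the proof does not close.

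A secondary problem is your treatment of the $(1-f)$-component. The identity $(1-f)(fk^2u+1)^{-1}=1-f$ concerns \emph{left} multiplication of the resolvent by $1-f$, whereas the resolvent in question sits to the \emph{right} of $\rho$. Decomposing $fk^2u+1$ with respect to $f$ and $1-f$ gives an upper-triangular block form whose inverse contains the off-diagonal term $-(ufk^2f+f)^{-1}\,ufk^2(1-f)$; this term does not vanish and does not tend to zero as $u\to\infty$, so the claim that "the $(1-f)$-component of $\rho$ on the right passes through the resolvent unchanged" and that the surviving contribution lies in $fAf$ is not justified as stated. (Note also that the final answer $f\,D_m(k^{-(2m+2)}f\rho f)$ is of the form $f\cdot(\text{something})$, not an element of $fAf$.) These issues are precisely where the new difficulty of the idempotent lives, and they need the contour-integral/complex-power argument, not a reduction to the $e=1$ case.
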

\begin{proof} First we look at the $N=0$ case.
We express the integrand as two Hadamard products \cite[p.~424]{fs} of generating functions multiplied together, and then we make
the change of variables $u=\exp(s)$. We will substitute an  improper integral using 
$$\frac{\exp[(h+s)/2]}{1+\exp(h+s-a-i\phi)}
=\exp[i(\phi-ia)/2]\int_{-\infty}^{\infty}\!\frac{\exp[it(h+s-a-i\phi)]}{\exp(\pi t)+\exp(-\pi t)}\,\mathrm dt$$
for the closed form of the Fourier transform of the hyperbolic secant, and will express an inner
integral as the Fourier transform of another function. Let $a$ be a positive real number. We get
\begin{eqnarray*}
&&\int_0^{\infty}\!(ek^2u+1)^{m+1}\backslash(u^me\rho)/(ek^2u+1)\,\mathrm du \\
&&=\int_0^{\infty}\!(ek^2u+1)^{m+1}\backslash(k^{2m+2}u^m(k^{-(2m+2)}e\rho))/(ek^2u+1)\,\mathrm du \\
&&=\int_0^{\infty}\!(k^2u)^{m+1/2}\left[\frac{1}{(1+k^2u)^{m+1}}
\odot\Delta^{m+1/2}\left(\left(\frac{1}{1-u\Delta R_e}(1)\right)\right)\right] \\
&&\Delta^{-1/2}(k^{-(2m+2)}e\rho)(k^2u)^{1/2} 
\end{eqnarray*}
\begin{eqnarray*}
&& \left[\frac{1}{1+k^2u}\odot\frac{1}{1-u\Delta R_e}(1)\right]\frac{\mathrm du}{u} \\
&&=\int_{-\infty}^{\infty}\!\left(\left[\frac{1}{2\pi}\int_0^{2\pi}\!\frac{\exp[(m+1/2)(s+h)]}{[1+\exp(h+s-a-i\theta)]^{m+1}} \right.\right. \\
&&\left.\Delta^{m+1/2}\left(\left(\frac{1}{1-\Delta R_e\exp(i\theta+a)}(1)\right)\right)\,\mathrm d\theta\right]\cdot\Delta^{-1/2}(k^{-(2m+2)}e\rho) \\
&&\left.\left[\frac{1}{2\pi}\int_0^{2\pi}\!\frac{\exp[(s+h)/2]}{1+\exp(h+s-a-i\phi)}\frac{1}{1-\Delta R_e\exp(i\phi+a)}(1)
\,\mathrm d\phi\right]\right)\,\mathrm ds 
\end{eqnarray*}
\begin{eqnarray*}
&&=\frac{1}{(2\pi)^2}\int_{-\infty}^{\infty}\!\frac{\exp(ith)}{\exp(\pi t)+\exp(-\pi t)}\left(\int_{-\infty}^{\infty}\!
\left[\int_0^{2\pi}\!\frac{\exp[(m+1/2)(s+h)]}{[1+\exp(h+s-a-i\theta)]^{m+1}} \right.\right. \\
&&\left.\Delta^{m+\frac12+it}\left(\left(\frac{1}{1-\Delta R_e\exp(i\theta+a)}(1)\right)\right)\,\mathrm d\theta\right]
\Delta^{-\frac12+it}(k^{-(2m+2)}e\rho) \\
&&\left.\left[\int_0^{2\pi}\!\frac{\exp[\frac{i\phi+a}{2}+its+t(\phi-ia)]}{1-\Delta R_e\exp(i\phi+a)}(1)\,\mathrm d\phi\right]
\mathrm ds\right)\mathrm dt 
\end{eqnarray*}
\begin{eqnarray*}
&&=\frac{1}{(2\pi)^2}\int_{-\infty}^{\infty}\!\frac{\exp(ith)}{\exp(\pi t)+\exp(-\pi t)}
\left[\int_0^{2\pi}\!\left(\int_{-\infty}^{\infty}\!\frac{\exp[(m+1/2)(s+h)]\exp(its)}{[1+\exp(h+s-a-i\theta)]^{m+1}}\,\mathrm ds\right)\right. \\
&&\left.\Delta^{m+\frac12+it}\left(\left(\frac{1}{1-\Delta R_e\exp(i\theta+a)}(1)\right)\right)\,\mathrm d\theta\right]\Delta^{-\frac12+it}(k^{-(2m+2)}e\rho) \\
&& \left[\int_0^{2\pi}\!\frac{\exp[(i\phi+a)/2+t(\phi-ia)]}{1-\Delta R_e\exp(i\phi+a)}(1)\,\mathrm d\phi\right]
\mathrm dt. 
\end{eqnarray*}
For the second equality, agreement of the integrands within the radii of convergence of the
geometric series implies agreement of the integrands throughout their analytic continuations: 
\begin{eqnarray*}
&&\left[\sum_{n=0}^{\infty}(-ek^2u)^n\right]^{m+1}k^{2m+2}u^m(k^{-(2m+2)}e\rho)
\left[\sum_{n=0}^{\infty}(-ek^2u)^n\right]  \\
&&=\left[\sum_{n_1=0}^{\infty}\cdots\sum_{n_{m+1}=0}^{\infty}\prod_{q=1}^{m+1}(-k^2u)^{n_q}
\left(\prod_{p=1}^{n_q}\Delta^pe\right)^*\right]  \\
&&\cdot k^{2m+2}u^m(k^{-(2m+2)}e\rho)\left[\sum_{n=0}^{\infty}(-k^2u)^n\left(\prod_{p=1}^n\Delta^pe\right)^*\right]  
\end{eqnarray*}
\begin{eqnarray*}
&&=\left[\sum_{n_1=0}^{\infty}\cdots\sum_{n_{m+1}=0}^{\infty}(-k^2u)^{\sum_{q=1}^{m+1}n_q}
\left(\prod_{p=1}^{\sum_{q=1}^{m+1}n_q}\Delta^pe\right)^*\right]  \\
&&\cdot k^{2m+2}u^m(k^{-(2m+2)}e\rho)\left[\sum_{n=0}^{\infty}(-k^2u)^n\left(\prod_{p=1}^n\Delta^pe\right)^*\right]. 
\end{eqnarray*}
Since
\begin{align*}
&\int_{-\infty}^{\infty}\!\frac{\exp[(m+\frac12)(s+h)]\exp(its)}{[1+\exp(h+s-a-i\theta)]^{m+1}}\,\mathrm ds \\
&=\exp[i(m+\frac12)(\theta-ai)]\int_{-\infty}^{\infty}\!\frac{\exp[(m+\frac12)(h+s-a-i\theta)]\exp(its)}{[1+\exp(h+s-a-i\theta)]^{m+1}}\,\mathrm ds \\
&=\exp[i(m+\frac12)(\theta-ai)]\exp[-it(h-a-i\theta)]\hat{f}_m(t),
\end{align*}
where $\hat{f}_m(t)$ is the Fourier transform of the function
$$f_m(s)=\frac{\exp[(m+1/2)s]}{[\exp(s)+1]^{m+1}},$$
we have:
\begin{eqnarray*}
&&\int_0^{\infty}\!(ek^2u+1)^{m+1}\backslash(u^me\rho)/(ek^2u+1)\,\mathrm du \\
&&= \frac{1}{(2\pi)^2}\int_{-\infty}^{\infty}\frac{\hat{f}_m(t)}{\exp(\pi t)+\exp(-\pi t)} \\
&&\Delta^{m+\frac12+it}
\left[\left(\left(\int_0^{2\pi}\!\frac{\exp[i(m+1/2+it)(\theta-ai)]}{1-\Delta R_e\exp(i\theta+a)}\,\mathrm d\theta\right)(1)\right)\right] 
\end{eqnarray*}
\begin{eqnarray*}
&&\Delta^{-\frac12+it}(k^{-(2m+2)}e\rho)
\left[\left(\int_0^{2\pi}\frac{\exp[i(1/2-it)(\phi-ia)]}{1-\Delta R_e\exp(i\phi+a)}\,\mathrm d\phi\right)(1)\right]\,\mathrm dt \\
&&= \frac{1}{(2\pi)^2}\int_{-\infty}^{\infty}\frac{\hat{f}_m(t)}{\exp(\pi t)+\exp(-\pi t)} \\
&&\left[\left(\left(\int_0^{2\pi}\!\frac{\exp[i(m+1/2+it)(\nabla+\theta-ai)]}{1-\Delta R_e\exp(i\theta+a)}\,\mathrm d\theta\right)(1)\right)\right] \\
&&\Delta^{-\frac12+it}\left(k^{-(2m+2)}e\rho
\left[\left(\int_0^{2\pi}\frac{\exp[i(1/2-it)(\nabla+\phi-ia)]}{1-\Delta R_e\exp(i\phi+a)}\,\mathrm d\phi\right)(1)\right]\right)\,\mathrm dt.
\end{eqnarray*}
For $e=1$, the inner integrals evaluate to $2\pi$, so what we get agrees with the previous result.
For $0\ne w\in\mathbb C\backslash(-\infty,0]$
and $a>-\ln|w|$, we integrate over a Hankel contour $H_{\epsilon, R}$ with branch cut along the negative $x$-axis and get
\begin{eqnarray*}
&&\int_0^{2\pi}\!\frac{\exp[i(m+1/2+it)(\nabla+\theta-ai)]}{1-w\exp(i\theta+a)}\,\mathrm d\theta \\
&&= \int_{C_{\exp{a}}}\frac{(\Delta z)^{m+1/2+it}}{1-zw}\frac{1}{i}\frac{\mathrm dz}{z} 
\end{eqnarray*}
\begin{eqnarray*}
&&= \frac{1}{i}\lim_{R\rightarrow\infty}\lim_{\epsilon\rightarrow 0}\int_{H_{\epsilon,R}}
\frac{(\exp[(m-1/2+it)(\log z+\nabla)])}{1-zw}\,\mathrm dz\,\Delta \\
&&= 2\pi(\exp[(m-1/2+it)\log(w^{-1}\Delta)])\Delta \\
&&= 2\pi((\Delta^{-1}w)^{\frac{1}{2}-m-it})\Delta, 
\end{eqnarray*}
and
\begin{align*}
&\int_0^{2\pi}\!\frac{\exp[i(1/2-it)(\nabla+\phi-ai)]}{1-w\exp(i\phi+a)}\,\mathrm d\phi \\
&= \int_{C_{\exp{a}}}\frac{(\Delta z)^{1/2-it}}{1-zw}\frac{1}{i}\frac{\mathrm dz}{z} \\
&= \frac{1}{i}\lim_{R\rightarrow\infty}\lim_{\epsilon\rightarrow 0}\int_{H_{\epsilon,R}}
\frac{(\exp[(-1/2-it)(\log z+\nabla)])}{1-zw}\,\mathrm dz\,\Delta \\
&= 2\pi(\exp[(-1/2-it)\log(w^{-1}\Delta)])\Delta \\
&= 2\pi(\Delta^{-1}w)^{\frac{1}{2}+it}\Delta.
\end{align*}
Since $R_e$ is an idempotent operator, we can reduce
to the previous result as follows:
\begin{eqnarray*}
&&\int_0^{\infty}\!(ek^2u+1)^{m+1}\backslash(u^me\rho)/(ek^2u+1)\,\mathrm du \\
&&=\int_{-\infty}^{\infty}\frac{\hat{f}_m(t)(\Delta^{-1}\Delta R_e)^{\frac12-m-it}\Delta(1)
\Delta^{-\frac12+it}(k^{-(2m+2)}e\rho(\Delta^{-1}\Delta R_e)^{\frac12+it}\Delta(1))\,\mathrm dt}{\exp(\pi t)+\exp(-\pi t)} \\
&&=\int_{-\infty}^{\infty}\frac{\hat{f}_m(t)}{\exp(\pi t)+\exp(-\pi t)}
R_e(1)\Delta^{-\frac12+it}(k^{-(2m+2)}e\rho R_e(1))\,\mathrm dt 
\end{eqnarray*}
\begin{eqnarray*}
&&=\int_{-\infty}^{\infty}\frac{\hat{f}_m(t)}{\exp(\pi t)+\exp(-\pi t)}e
\Delta^{-\frac12+it}(k^{-(2m+2)}e\rho e)\,\mathrm dt \\
&&=e\int_{-\infty}^{\infty}\frac{\hat{f}_m(t)}{\exp(\pi t)+\exp(-\pi t)}
\Delta^{-\frac12+it}(k^{-(2m+2)}e\rho e)\,\mathrm dt \\
&&=e D_m(k^{-(2m+2)}e\rho e).
\end{eqnarray*}
Now that we have proven the lemma for $N=0$, we may replace $\rho$ with $\sigma^{-N}(\rho)$ and apply $\sigma^N$ to both sides, which
proves the lemma for any integer $N$.
\end{proof}

\begin{cor} Let $N$ be an integer. Then
\begin{align*}
\int_0^{\infty}\!(\sigma^N(ek^2e)u+1)^{m+1}\backslash(u^m\sigma^N(e)\rho)/(\sigma^N(ek^2e)u+1)\,\mathrm du & \\
=\sigma^N(e) D_m(k^{-(2m+2)}\sigma^N(e)\rho \sigma^N(e))\sigma^N(e) &
\end{align*}
\end{cor}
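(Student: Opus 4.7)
The plan is to deduce the corollary directly from the preceding lemma by exploiting the absorption of $\sigma^N(e)$ into the symmetric denominator $\sigma^N(ek^2e)u+1$, thereby relating it to the asymmetric denominator $\sigma^N(ek^2)u+1$ handled by the lemma. Throughout, set $\bar e := \sigma^N(e)$, $A := \sigma^N(ek^2e)u+1$, and $B := \sigma^N(ek^2)u+1$.

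First I would establish two algebraic facts. The commutation $\bar e A = A\bar e$ reduces to $e(ek^2e) = ek^2e = (ek^2e)e$, which is immediate from $e^2=e$ and extends to all powers of $A$ and $A^{-1}$. The absorption identity
\begin{equation*}
A^{-n}\,\bar e \;=\; B^{-n}\,\bar e, \qquad n \geq 1,
\end{equation*}
is proved by expanding both sides as power series in $u$ and matching coefficients: the only nontrivial identification needed is $(ek^2e)^k e = (ek^2)^k e$ for $k \geq 1$, which follows by a straightforward induction from $e^2=e$.

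Next I would apply these identities to the integrand of the corollary after right-multiplication by $\bar e$:
\begin{align*}
A^{-(m+1)}\, u^m \bar e \rho\, A^{-1}\bar e
&= A^{-(m+1)}\, u^m \bar e \rho\, B^{-1}\bar e \\
&= u^m\, B^{-(m+1)}\bar e \rho\, B^{-1}\bar e.
\end{align*}
Integrating over $u$, pulling the constant $\bar e$ outside, and invoking the preceding lemma yields
\begin{equation*}
\left[\int_0^{\infty}\! A^{-(m+1)}\, u^m \bar e \rho\, A^{-1}\,du\right]\!\bar e
\;=\; \bar e\, D_m\!\bigl(k^{-(2m+2)}\bar e \rho\, \bar e\bigr)\,\bar e,
\end{equation*}
which is the corollary up to one extra right multiplication by $\bar e$ on the left-hand side.

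The main obstacle is to remove this right multiplication, equivalently to show that $\int_0^{\infty} A^{-(m+1)}\, u^m \bar e \rho\,(1-\bar e)\,du$ contributes trivially. Since $\sigma^N(ek^2e)(1-\bar e) = 0$, the operator $A$ acts as the identity on the $(1-\bar e)$-side, so $A^{-(m+1)}$ decomposes as $\bar e A^{-(m+1)}\bar e + (1-\bar e)$ and the off-diagonal piece reduces to an expression entirely within the $\bar e$-corner of the algebra. In this corner the same Hadamard product and Hankel contour computation used to prove the preceding lemma applies, showing that the remaining integral is compatible with the claimed two-sided sandwich by $\bar e$ and completing the reduction.
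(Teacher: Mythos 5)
Your algebraic reductions are correct and in fact mirror the paper's: the identities $\sigma^N(e)A=A\sigma^N(e)$ and $A^{-n}\sigma^N(e)=B^{-n}\sigma^N(e)$ (with your notation $A=\sigma^N(ek^2e)u+1$, $B=\sigma^N(ek^2)u+1$), which rest on $(ek^2e)^je=(ek^2)^je$, are exactly the manipulations the paper uses, and your computation of the ``$\cdot\,\bar e$'' component correctly recovers $\bar e\,D_m(k^{-(2m+2)}\bar e\rho\bar e)\,\bar e$ from the lemma. The problem is the last step. You correctly isolate the remaining obligation, namely that
$\int_0^{\infty}A^{-(m+1)}u^m\bar e\rho(1-\bar e)\,\mathrm du$
vanishes, but you do not prove it: the final paragraph only asserts that ``the same Hadamard product and Hankel contour computation applies'' in the corner algebra and that the result is ``compatible with'' the claimed answer. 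That is not an argument, and it is also misleading on two counts. First, the integrand $\bar eA^{-(m+1)}\bar e\,u^m\,\bar e\rho(1-\bar e)$ is an off-diagonal element $\bar e(\,\cdot\,)(1-\bar e)$, not an element of the $\bar e$-corner. Second, a direct evaluation in the corner would fail: since $\sigma^N(ek^2e)$ is invertible in the corner $\bar eA_\theta^\infty\bar e$, one has $\bar eA^{-(m+1)}\bar e\,u^m\sim Cu^{-1}$ as $u\to\infty$, so $\int_0^\infty \bar eA^{-(m+1)}\bar e\,u^m\,\mathrm du$ diverges logarithmically; the vanishing you need is not something a straightforward contour computation of that integral will produce.

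The paper closes this gap by a purely formal trick that stays entirely within the statement of the lemma: apply the lemma once to $\rho$ and once to $\rho e$. Because $\bar e(\rho e)\bar e=\bar e\rho\bar e$, both applications produce the same right-hand side, so subtracting them and using $(1-\bar e)B^{-1}=1-\bar e$ yields precisely
$\int_0^{\infty}B^{-(m+1)}u^m\bar e\rho(1-\bar e)\,\mathrm du=0$,
which by your own absorption identity equals the integral you need to kill. Adding this to the right-multiplied-by-$\bar e$ identity and using $(\sigma^N(ek^2e)u+1)^{-1}=(1-\bar e)+B^{-1}\bar e$ then finishes the proof. So your proposal has the right skeleton but is missing the one genuinely nontrivial idea --- the double application of the lemma to $\rho$ and $\rho e$ --- and substitutes for it an appeal to an analytic computation that, as written, would not converge.
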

\begin{proof}
The $N=0$ case of lemma we just proved implies that, for any $\rho\in A_{\theta}^{\infty}$
and every non-negative integer $m$, we have
$$\int_0^{\infty}\!(ek^2u+1)^{m+1}\backslash(u^me\rho)/(ek^2u+1)\,\mathrm du
=e D_m(k^{-(2m+2)}e\rho e)$$
and
$$\int_0^{\infty}\!(ek^2u+1)^{m+1}\backslash(u^me\rho e)/(ek^2u+1)\,\mathrm du
=e  D_m(k^{-(2m+2)}e\rho e).$$
Since $1/(ek^2u+1)-e/(ek^2u+1)=1-e$, subtracting the two equations immediately above gives us
$$\int_0^{\infty}\!(ek^2u+1)^{m+1}\backslash(u^me\rho(1-e))\,\mathrm du=0.$$
Also,
$$\int_0^{\infty}\!(ek^2u+1)^{-(m+1)}u^me\rho(ek^2u+1)^{-1}e\,\mathrm du
=e D_m(k^{-(2m+2)}e\rho e)e.$$
Since $1/(ek^2eu+1)=1-e+(ek^2u+1)\backslash e$, adding the two equations immediately above
gives us
$$\int_0^{\infty}\!(ek^2u+1)^{m+1}\backslash(u^me\rho)/(ek^2eu+1)\,\mathrm du
=e  D_m(k^{-(2m+2)}e\rho e)e.$$
Since $(ek^2u+1)^{m+1}\backslash e=(ek^2eu+1)^{m+1}\backslash e$, we get
$$\int_0^{\infty}\!(ek^2eu+1)^{m+1}\backslash(u^me\rho)/(ek^2eu+1)\,\mathrm du
=e D_m(k^{-(2m+2)}e\rho e)e.$$
After replacing $\rho$ with $\sigma^{-N}(\rho)$ and applying $\sigma^N$ to both sides, we are done.
\end{proof}

We also state a corollary of our rearrangement lemma regarding orthogonal projections. 
Suppose $e_1,e_2\in A_{\theta}^{\infty}$ are orthogonal self-adjoint idempotents,
i.e.~$e_1^2=e_1=e_1^*$, $e_2^2=e_2=e_2^*$, and $e_1e_2=e_2e_1=0$. Then $(e_1+e_2)^2=e_1^2+e_2^2=e_1+e_2
=(e_1+e_2)^*$, so $e_1+e_2$ is a self-adjoint idempotent and the following corollary follows from our rearrangement lemma:  
\begin{cor}
For every element $\rho\in A_{\theta}^{\infty}$ and every non-negative integer $m$ we have
\begin{align*}
\int_0^{\infty}\!(\sigma^N(e_1+e_2)k^2u+1)^{m+1}\backslash(u^m\sigma^N(e_1+e_2)\rho)/(\sigma^N(e_1+e_2)k^2u+1)\,\mathrm du & \\
=\sigma^N(e_1+e_2)  D_m(k^{-(2m+2)}\sigma^N(e_1+e_2)\rho \sigma^N(e_1+e_2)). &
\end{align*}
\end{cor}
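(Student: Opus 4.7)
The plan is to reduce this statement immediately to the rearrangement lemma already proved (the one preceding the corollary about $e$ being a single self-adjoint idempotent), by observing that $e_1+e_2$ itself is a self-adjoint idempotent. This is exactly the computation flagged in the paragraph just before the corollary: orthogonality plus self-adjointness give
\[
(e_1+e_2)^2 = e_1^2 + e_1 e_2 + e_2 e_1 + e_2^2 = e_1 + e_2,
\qquad (e_1+e_2)^* = e_1^* + e_2^* = e_1 + e_2,
\]
so $e := e_1+e_2$ is a self-adjoint idempotent in $\SNT$.

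Next, I would simply apply the previous lemma (stated for a single self-adjoint idempotent $e$, and for arbitrary integer $N$) with this particular choice $e = e_1+e_2$. Since the lemma's conclusion is
\[
\int_0^{\infty}\!(\sigma^N(ek^2)u+1)^{m+1}\backslash(u^m\sigma^N(e)\rho)/(\sigma^N(ek^2)u+1)\,\mathrm du
= \sigma^N(e)\, D_m\!\bigl(k^{-(2m+2)}\sigma^N(e)\rho\,\sigma^N(e)\bigr),
\]
substituting $e = e_1+e_2$ gives exactly the desired identity. No further analytic work is required; the Hadamard-product / hyperbolic secant Fourier transform computation used to establish the underlying lemma for arbitrary self-adjoint idempotents already covers this case verbatim.

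The only real content is therefore the verification that $e_1+e_2$ lies in the hypothesis class of the main lemma, which is immediate from the orthogonality relations $e_1 e_2 = e_2 e_1 = 0$. There is no genuine obstacle: unlike the passage from the lemma to the first corollary (which required the nontrivial algebraic trick with $1/(ek^2eu+1) = 1-e + (ek^2u+1)\backslash e$ to accommodate the two-sided denominator $\sigma^N(ek^2e)$), here the denominator is $\sigma^N(e_1+e_2)k^2u+1$ rather than $\sigma^N((e_1+e_2)k^2(e_1+e_2))u+1$, so the lemma applies directly without the need for any sandwich manipulation. The proof is therefore a single line after invoking the previous result.
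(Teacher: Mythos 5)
Your proposal is correct and is essentially identical to the paper's own argument: the paper likewise observes that orthogonality kills the cross terms so that $e_1+e_2$ is a self-adjoint idempotent, and then invokes the rearrangement lemma verbatim with $e=e_1+e_2$ (noting, as you do, that $\sigma^N(e)k^2=\sigma^N(ek^2)$ so the denominators match and no sandwich manipulation is needed). Nothing is missing.
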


\section*{Acknowledgments}
F. F. acknowledges the support from the Marie
Curie/SER Cymru II Cofund Research Fellowship 663830-SU-008. 
J. T.  acknowledges the support from the 2017-2018 Perimeter 
Institute Visiting Graduate Fellowship. The authors are indebted to 
Matilde Marcolli for discussions, and her support and encouragements.

\end{document}